\newcommand{\R}{\mathbb R}
\newcommand{\N}{\mathbb N}
 \renewcommand{\headrulewidth}{0pt}
 \renewcommand{\footrulewidth}{0.5pt}
 \definecolor{myaqua}{rgb}{0.0,0.5,0.55}
 \definecolor{lightaqua}{rgb}{0.75,0.95,0.95}
\newtheorem{theorem}{Theorem}
\newtheorem{prop}{Proposition}
\newtheorem{lem}{Lemma}
\newtheorem{coro}{Corollary}
\newtheorem{defn}{Definition}[section]
\newtheorem{rem}{Remark}[section]
\def\lin#1#2{\textcolor[rgb]{0.6,0.6,0.6}{\vspace*{#1mm} \hrule
   height 3 pt \vspace*{#2mm}}}
\def\bt{\begin{tabular}}
\def\et{\end{tabular}}
\def\and{\mbox{ and }}
\def\P{\mbox{\bf P}}
\def\1{{\bf 1}}
 \def\boxx#1#2#3#4#5{
 {\linethickness{#4pt}\put(#1,#5){\color{myaqua}{\line(1,0){#3}}}}
 \multiput(#1,#2)(0,#4){2}{\line(1,0){#3}}
 \multiput(#1,#2)(#3,0){2}{\line(0,1){#4}}
  }
\begin{document}


 $\mbox{ }$

 \vskip 12mm

{ 

{\noindent{\Large\bf\color{myaqua}
  An ideal class to construct solutions for skew Brownian motion equations}} 
%
\\[6mm]
{\bf Fulgence EYI OBIANG$^1$, Octave MOUTSINGA$^2$ and Youssef OUKNINE$^3$}}
\\[2mm]
{ 
$^1$URMI Laboratory, Département de Mathématiques et Informatique, Faculté des Sciences, Université des Sciences et Techniques de Masuku, BP: 943 Franceville, Gabon. 
\\
Email: \href{mailto:feyiobiang@yahoo.fr}{\color{blue}{\underline{\smash{feyiobiang@yahoo.fr}}}}\\[1mm]
$^2$ URMI Laboratory, Département de Mathématiques et Informatique, Faculté des Sciences, Université des Sciences et Techniques de Masuku, BP: 943 Franceville, Gabon. \\
\href{mailto:octavemoutsing-pro@yahoo.fr}{\color{blue}{\underline{\smash{octavemoutsing-pro@yahoo.fr}}}}\\[1mm]
$^3$LIBMA Laboratory, Department of Mathematics, Faculty of Sciences Semlalia, Cadi Ayyad University, P.B.O. 2390 Marrakech, Morocco.\\
 Hassan II Academy of Sciences and Technologies, Rabat, Morocco. \\
Africa Business School, Mohammed VI Polytechnic, Lot 660, HayMoulay Rachid, P.B.O. 43150, Benguerir, Moroco.\\
Email:\href{mailto:ouknine@ucam.ac.ma}{\color{blue}{\underline{\smash{ouknine@uca.ac.ma}}}}\\[1mm]
\lin{5}{7}

 {  
 {\noindent{\large\bf\color{myaqua} Abstract}{\bf \\[3mm]
 \textup{
 This paper contributes to the study of stochastic processes of the class $(\Sigma)$. First, we extend the notion of the above-mentioned class to càdlàg semi-martingales, whose finite variational part is considered càdlàg instead of continuous. Thus, we present some properties and propose a method to characterize such stochastic processes. Second, we investigate continuous processes of the class $(\Sigma)$. More precisely, we derive a series of new characterization results. In addition, we construct solutions for skew Brownian motion equations using continuous stochastic processes of the class $(\Sigma)$.
 }}}
 \\[4mm]
 {\noindent{\large\bf\color{myaqua} Keywords:}{\bf \\[3mm]
 Class $(\Sigma)$; Skew Brownian motion; Balayage formula; Honest time; Relative martingales.
}}\\[4mm]{\noindent{\large\bf\color{myaqua} MSC:}{\color{blue} 60G07; 60G20; 60G46; 60G48}}
\lin{3}{1}

\renewcommand{\headrulewidth}{0.5pt}
\renewcommand{\footrulewidth}{0pt}

 \pagestyle{fancy}
 \fancyfoot{}
 \fancyhead{} 
 \fancyhf{}
 \fancyhead[RO]{\leavevmode \put(-140,0){\color{myaqua} Fulgence EYI OBIANG et al. (2020)} \boxx{15}{-10}{10}{50}{15} }
 \fancyfoot[C]{\leavevmode
 \put(-2.5,-3){\color{myaqua}\thepage}}

 \renewcommand{\headrule}{\hbox to\headwidth{\color{myaqua}\leaders\hrule height \headrulewidth\hfill}}
{\section*{Introduction}}

This study investigates semi-martingales of the class $(\Sigma)$. We consider stochastic processes $X$ of the form
$$X=M+A,$$
where $M$ is a local martingale and $A$ is an adapted finite variation process such that $dA_{t}$ is carried by $\{t\geq0:X_{t}=0\}$. Such processes are frequently encountered in stochastic analysis. Well-known examples of such processes include càdlàg local martingales, the absolute value of a continuous martingale, the positive and negative parts of a continuous martingale, solutions of skew Brownian motion equations starting from zero, and the drawdown of a càdlàg local martingale with only negative jumps. These processes play an important role in many probabilistic studies, e.g., the study of zeros of continuous martingales, the theory of Azéma--Yor martingales, the resolution of Skorokhod's reflection equation and embedding problem, and the study of Brownian local times.

The notion of the class $(\Sigma)$ has been studied extensively (see \cite{pat,eomt,naj,naj1,naj2,naj3,nik,mult,y1}), and its extensions have been presented in the literature. It was first introduced by Yor \cite{y1} for continuous positive submartingales and subsequently extended \cite{y} to continuous semi-martingales. Nikeghbali \cite{nik} and Cheridito et al. \cite{pat} proposed and studied an extension of this notion to càdlàg semi-martingales. However, we have highlighted some shortcomings of the above-mentioned developments, which we attempt to address in this study. First, in all the above-mentioned cases, the finite variational part of a process of the class $(\Sigma)$ is always considered continuous. Meanwhile, Nikeghbali presented two remarkable characterization results (Theorem 2.1 of \cite{nik} and Proposition 2.4 of \cite{mult}). The drawback of these results is that they only characterize positive submartingales of the class $(\Sigma)$. Finally, we focus on the construction of solutions of the following skew Brownian motion equations:
\begin{equation}
	dX_{t}=dB_{t}+(2\alpha-1)dL^{0}_{t}(X)
\end{equation}
and
\begin{equation}\label{ism}
	X_{t}=x+B_{t}+\int_{0}^{t}{(2\alpha(s)-1)dL_{s}^{0}(X)},
\end{equation}
 where $B$ is a standard Brownian motion, $x\in\R$, and $L_{t}^{0}(X)$ denotes the symmetric local time at $0$ of the unknown process $X$. Recall that these equations first appeared in the seminal work of Itô and Mckean \cite{11}; since then, they have been studied extensively \cite{siam,9,10,11,16,21}. In this study, we focus on the solution given by Bouhadou and Ouknine \cite{siam}, which is of the form
$$X_{t}=Z_{\gamma_{t}}|B_{t}|,$$
where $Z$ is a progressive process that we shall recall later, $B$ is a standard Brownian motion, and $\gamma_{t}=\sup\{s\leq t:B_{s}=0\}$. We remark that $|B|$ is an element of the class $(\Sigma)$. Furthermore, many results obtained on the processes of the class $(\Sigma)$ are generally extensions of the results initially proved for the Brownian motion. Hence, an intuitive question is to determine whether it is possible to construct solutions for the skew Brownian motion equations using other processes of the class $(\Sigma)$.

This study aims to contribute to the literature in the sense of the above-mentioned remarks. First, we present a general framework to study a larger class of càdlàg stochastic processes. More precisely, we propose extending the definition of Cheridito et al. \cite{pat} by weakening the continuity condition on the finite variational part of processes of the class $(\Sigma)$. In other words, we consider the following definition.
\begin{defn}\label{d1}
We say that a semi-martingale $X$ is of the class $(\Sigma)$ if it decomposes as $X=M+A$, where
\begin{enumerate}
	\item $M$ is a càdlàg local martingale, with $M_{0}=0$;
	\item $A$ is an adapted càdlàg predictable process with finite variations such that $A_{0-}=A_{0}=0$; 
	\item  $\int_{0}^{t}{1_{\{X_{s}\neq0\}}dA_{s}}=0$ for all $t\geq0$.
\end{enumerate}
\end{defn}
 Hence, we explore the general properties obtained for the previous versions of the class $(\Sigma)$ in \cite{nik,pat,mult}. For instance, we study the positive and negative parts of processes of the class $(\Sigma)$. We prove that the product of processes of the class $(\Sigma)$ with vanishing quadratic covariation is also of the class $(\Sigma)$. Further, we show that every positive process $X$ of the class $(\Sigma)$ admits a multiplicative decomposition. In other words, it can be decomposed as
$$X=CW-1,$$
where $W$ is a positive local martingale with $W_{0}=1$ and $C$ is a non-decreasing process. This result is an extension of that obtained by Nikeghbali for positive and continuous submartingales \cite{mult}. Finally, we generalize a result of Nikeghbali (Theorem 2.1 of \cite{nik}) that gives a martingale characterization for positive processes of the class $(\Sigma)$.

Second, we study continuous processes of the class $(\Sigma)$. To the best of our knowledge, this is the first study to present a series of results that permit characterization of all continuous processes (note necessary positive) of the class $(\Sigma)$. For instance, we extend the martingale characterization given in Theorem 2.1 of \cite{nik} as well as Proposition 2.4 of \cite{mult}. In addition, we obtain other characterization results using an interesting balayage formula given in Proposition2.2 of \cite{siam} and subsequently derive some corollaries. Finally, we focus on the construction of solutions for homogeneous and inhomogeneous skew Brownian Motion equations using continuous stochastic processes of the class $(\Sigma)$. More precisely, we generalize the construction of Bouhadou and Ouknine   to all continuous processes of the class $(\Sigma)$.

{\section{Processes of a new extension of the class \texorpdfstring{$(\Sigma)$}{sigma}}}

In this section, we present a framework to study stochastic processes satisfying the conditions of Definition \ref{d1}.

{\subsection{Preliminaries}}

Here, we explore some general properties of processes satisfying Definition \ref{d1}. Hence, we start by studying the positive and negative parts of processes of the class $(\Sigma)$ in the following lemma.

\begin{lem}\label{p3}
Let $X=M+A$ be a process of the class $(\Sigma)$. The following hold:
\begin{enumerate}
	\item If $A$ has no negative jump and $\int_{0}^{t}{1_{\{X_{s}\neq0\}}dA^{c}_{s}}=0$, then $X^{+}$ is a local submartingale.
	\item If $A$ has no positive jump and $\int_{0}^{t}{1_{\{X_{s}\neq0\}}dA^{c}_{s}}=0$, then $X^{-}$ is a local submartingale.
	\item If $X$ has no positive jump, then $X^{+}$ is also of the class $(\Sigma)$.
	\item If $X$ has no negative jump, then $X^{-}$ is also of the class $(\Sigma)$.
\end{enumerate}
\end{lem}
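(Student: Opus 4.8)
The plan is to apply the Itô–Tanaka formula to the functions $x^{+}$ and $x^{-}$ and to use the defining property of the class $(\Sigma)$, namely that $dA$ is carried by the zero set of $X$. Let me sketch the four parts.

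The plan is to apply the Meyer--Tanaka (It\^o--Tanaka) formula for c\`adl\`ag semimartingales to the convex function $x\mapsto x^{+}$. Since $X_{0}=M_{0}+A_{0}=0$, this gives
\[
X_{t}^{+}=\int_{0}^{t}1_{\{X_{s-}>0\}}\,dM_{s}+\int_{0}^{t}1_{\{X_{s-}>0\}}\,dA_{s}+\tfrac{1}{2}L_{t}^{0}(X)+J_{t},
\]
where $J_{t}=\sum_{0<s\le t}\big[(X_{s})^{+}-(X_{s-})^{+}-1_{\{X_{s-}>0\}}\Delta X_{s}\big]$. The first integral is a local martingale; the local time $L^{0}(X)$ is continuous, non-decreasing, and carried by $\{X=0\}$; and, by the subgradient inequality for the convex map $x\mapsto x^{+}$, every summand of $J$ is non-negative, so $J$ is non-decreasing. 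Everything then reduces to understanding the term $\int_{0}^{t}1_{\{X_{s-}>0\}}\,dA_{s}$ through the hypotheses on $A$ and on $X$.

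For assertions (1) and (2) I would split $dA=dA^{c}+dA^{d}$. As $A^{c}$ is continuous it does not charge the countably many jump times of $X$, so $\int_{0}^{t}1_{\{X_{s-}>0\}}\,dA_{s}^{c}=\int_{0}^{t}1_{\{X_{s}>0\}}\,dA_{s}^{c}$, which vanishes under the assumption $\int_{0}^{t}1_{\{X_{s}\neq0\}}\,dA^{c}_{s}=0$. The remaining contribution is $\sum_{0<s\le t}1_{\{X_{s-}>0\}}\Delta A_{s}$, which is non-decreasing precisely when $A$ has no negative jump. Collecting the three non-decreasing pieces exhibits $X^{+}$ as a local martingale plus an increasing process, i.e. a local submartingale, proving (1). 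Assertion (2) then follows by applying (1) to $-X=(-M)+(-A)$: its zero set coincides with that of $X$, its finite variation part $-A$ has no negative jump exactly when $A$ has no positive jump, and $X^{-}=(-X)^{+}$.

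For assertions (3) and (4) the new input is the sign restriction on the jumps of $X$ itself, which lets me evaluate $J$ explicitly. When $X$ has no positive jump, splitting on the sign of $X_{s-}$ and using $X_{s}\le X_{s-}$ shows that the only non-zero summands occur at the down-crossing times $\{X_{s-}>0,\ X_{s}<0\}$, where the summand equals $(X_{s})^{-}$; at every such time $X_{s}<0$, hence $X^{+}_{s}=0$. Moreover $dA$ is carried by $\{X=0\}$, so (after discarding the vanishing continuous part) $\int_{0}^{t}1_{\{X_{s-}>0\}}\,dA_{s}$ charges only times with $X_{s-}>0$ and $X_{s}=0$, again inside $\{X^{+}=0\}$, while $\tfrac12 L^{0}(X)$ lives on $\{X=0\}\subseteq\{X^{+}=0\}$. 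Writing $X^{+}=N+B$ with $N=\int_{0}^{\cdot}1_{\{X_{s-}>0\}}\,dM_{s}$ and $B$ the sum of the three finite variation terms, the increments of $B$ are carried by $\{X^{+}=0\}$, with $B_{0-}=B_{0}=0$, matching Definition \ref{d1}. Assertion (4) is the mirror statement deduced from $X^{-}=(-X)^{+}$.

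The main obstacle lies in (3) and (4): one must check not merely that $B$ is supported on $\{X^{+}=0\}$ but that the decomposition $X^{+}=N+B$ truly fits Definition \ref{d1}, whose finite variation part is required to be predictable. The delicate point is the jump sum $J$. Indeed, since $dA$ is carried by $\{X=0\}$, it does not charge the down-crossing times (there $X_{s}<0\neq0$), so those jumps of $J$ come purely from the c\`adl\`ag local martingale $M$; reconciling them, together with the mismatch between the left-limit indicator $1_{\{X_{s-}>0\}}$ and the carrier $\{X_{s}=0\}$ of $dA$, with the predictability of the finite variation part is exactly where the ``no positive (resp.\ negative) jump'' hypothesis must do its work, and where I expect the careful bookkeeping to concentrate.
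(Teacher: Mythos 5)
Your parts (1) and (2) are correct and follow the paper's own route: the Meyer--Tanaka formula for $x\mapsto x^{+}$, the observation that $dA^{c}$ does not charge jump times so the hypothesis $\int_{0}^{t}1_{\{X_{s}\neq0\}}dA^{c}_{s}=0$ kills the continuous part of $\int 1_{\{X_{s-}>0\}}dA_{s}$, the sign condition on $\Delta A$ making the remaining jump sum non-decreasing, and the reduction of (2) to (1) via $-X$. Your jump-correction term $J$ is exactly the paper's $\sum_{0<s\le t}1_{\{X_{s-}\le0\}}X_{s}^{+}+\sum_{0<s\le t}1_{\{X_{s-}>0\}}X_{s}^{-}$.

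For parts (3) and (4), however, there is a genuine gap, and it is precisely the one you flag in your last paragraph without resolving. You propose the decomposition $X^{+}=N+B$ with $B=J+\int_{0}^{\cdot}1_{\{X_{s-}>0\}}dA_{s}+\tfrac12 L^{0}$, and you correctly check that $dB$ is carried by $\{X^{+}=0\}$. But, as you yourself observe, the jumps of $J$ occur at down-crossing times where $X_{s}<0$, hence are produced by the jumps of the local martingale $M$; these times are in general totally inaccessible, so $J$ (and therefore $B$) is adapted and increasing but \emph{not} predictable, and the decomposition does not satisfy Definition~\ref{d1}. Identifying the obstacle is not the same as overcoming it. The paper's resolution is the missing idea: after localizing so that $Z_{t}=\sum_{0<s\le t}1_{\{X_{s-}>0\}}X_{s}^{-}$ is locally integrable, invoke Theorem~VI.80 of Dellacherie--Meyer to produce the dual predictable projection $V^{Z}$ of $Z$, absorb the compensated difference $Z-V^{Z}$ into the local martingale part, and then verify that $dV^{Z}$ is still carried by $\{X^{+}=0\}$ via the computation
\[
E\Bigl[\int_{0}^{t\wedge R_{n}}1_{\{X^{+}_{s}\neq0\}}\,dV^{Z}_{s}\Bigr]
=E\Bigl[\int_{0}^{t\wedge R_{n}}1_{\{X^{+}_{s}\neq0\}}\,dZ_{s}\Bigr]
=E\Bigl[\sum_{0<s\le t\wedge R_{n}}1_{\{X_{s}>0\}}1_{\{X_{s-}>0\}}X_{s}^{-}\Bigr]=0 ,
\]
since $1_{\{X_{s}>0\}}X_{s}^{-}=0$. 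This yields $X^{+}=\bigl(N+(Z-V^{Z})\bigr)+\bigl(V^{Z}+\int_{0}^{\cdot}1_{\{X_{s-}>0\}}dA_{s}+\tfrac12 L^{0}\bigr)$ with a predictable finite variation part carried by $\{X^{+}=0\}$, which is what Definition~\ref{d1} requires; part (4) then follows by applying this to $-X$. Without this compensation step your argument establishes only that $X^{+}$ is a local submartingale whose (non-predictable) increasing part is supported on $\{X^{+}=0\}$, not that $X^{+}\in(\Sigma)$.
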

\begin{proof}
\begin{enumerate}
	\item From Tanaka's formula, we have
	$$X_{t}^{+}=\int_{0}^{t}{1_{\{X_{s^{-}}>0\}}dX_{s}}+\sum_{0<s\leq t}{1_{\{X_{s^{-}}\leq0\}}X_{s}^{+}}+\sum_{0<s\leq t}{1_{\{X_{s^{-}}>0\}}X_{s}^{-}}+\frac{1}{2}L_{t}^{0}.$$
	However,
	$$\hspace{-4cm}\int_{0}^{t}{1_{\{X_{s^{-}}>0\}}dX_{s}}=\int_{0}^{t}{1_{\{X_{s^{-}}>0\}}dM_{s}}+\int_{0}^{t}{1_{\{X_{s^{-}}>0\}}dA_{s}}$$
	$$\hspace{2cm}=\int_{0}^{t}{1_{\{X_{s^{-}}>0\}}dM_{s}}+\int_{0}^{t}{1_{\{X_{s^{-}}>0\}}dA^{c}_{s}}+\sum_{s\leq t}{1_{\{X_{s^{-}}>0\}}\Delta A_{s}}$$
	$$\hspace{2cm}=\int_{0}^{t}{1_{\{X_{s^{-}}>0\}}dM_{s}}+\int_{0}^{t}{1_{\{X_{s}>0\}}dA^{c}_{s}}+\sum_{s\leq t}{1_{\{X_{s^{-}}>0\}}\Delta A_{s}}$$ 
	since $A^{c}$ is continuous. Then,
	$$\int_{0}^{t}{1_{\{X_{s^{-}}>0\}}dX_{s}}=\int_{0}^{t}{1_{\{X_{s^{-}}>0\}}dM_{s}}+\sum_{s\leq t}{1_{\{X_{s^{-}}>0\}}\Delta A_{s}}$$
	because $dA^{c}$ is carried by $\{t\geq0:X_{t}=0\}$.
	Hence, we get
	$$X_{t}^{+}=\int_{0}^{t}{1_{\{X_{s^{-}}>0\}}dM_{s}}+\sum_{s\leq t}{1_{\{X_{s^{-}}>0\}}\Delta A_{s}}+\sum_{0<s\leq t}{1_{\{X_{s^{-}}\leq0\}}X_{s}^{+}}+\sum_{0<s\leq t}{1_{\{X_{s^{-}}>0\}}X_{s}^{-}}+\frac{1}{2}L_{t}^{0}.$$
	We know that $A$ has no negative jump. Thus, $(\sum_{s\leq t}{1_{\{X_{s^{-}}>0\}}\Delta A_{s}};t\geq0)$ is an increasing process that is null at zero. Moreover, $\left(\sum_{0<s\leq t}{1_{\{X_{s^{-}}\leq0\}}X_{s}^{+}}+\sum_{0<s\leq t}{1_{\{X_{s^{-}}>0\}}X_{s}^{-}}+\frac{1}{2}L_{t}^{0};t\geq0\right)$ is an increasing process that is vanishing at zero. Then, $X^{+}$ is a submartingale, since $M$ and $\int_{0}^{\cdot}{1_{\{X_{s^{-}}>0\}}dM_{s}}$ are local martingales.
	\item Now, we remark that $-X$ is also an element of the class $(\Sigma)$ and that its finite variational part, $-A$,  has no negative jump when $A$ has no positive jump. Therefore, it follows that $X^{-}=(-X)^{+}$ is a submartingale.
	\item We have
	$$X_{t}^{+}=\int_{0}^{t}{1_{\{X_{s^{-}}>0\}}dX_{s}}+\sum_{0<s\leq t}{1_{\{X_{s^{-}}>0\}}X_{s}^{-}}+\frac{1}{2}L_{t}^{0}$$
	since $X$ has no positive jump. Moreover,
	$$\int_{0}^{t}{1_{\{X_{s^{-}}>0\}}dX_{s}}=\int_{0}^{t}{1_{\{X_{s^{-}}>0\}}dM_{s}}+\int_{0}^{t}{1_{\{X_{s^{-}}>0\}}dA_{s}}.$$
  Hence,
	\begin{equation}\label{*}
	X_{t}^{+}=\int_{0}^{t}{1_{\{X_{s^{-}}>0\}}dM_{s}}+\int_{0}^{t}{1_{\{X_{s^{-}}>0\}}dA_{s}}+\sum_{0<s\leq t}{1_{\{X_{s^{-}}>0\}}X_{s}^{-}}+\frac{1}{2}L_{t}^{0}.
	\end{equation}
Now, let us set $Z_{t}=\sum_{0<s\leq t}{1_{\{X_{s^{-}}>0\}}X_{s}^{-}}$. Since $M$ and $\int_{0}^{\cdot}{1_{\{X_{s^{-}}>0\}}dM_{s}}$ are local martingales and $A$ is càdlàg, there exists a sequence of stopping times $(T_{n};n\in\N)$ increasing to $\infty$ such that 
	$$E[(X_{T_{n}})^{+}]=E[(M_{T_{n}}+A_{T_{n}})^{+}]<\infty\text{ and }E\left[\int_{0}^{T_{n}}{1_{\{X_{s^{-}}>0\}}dM_{s}}\right]=0\text{, }n\in\N.$$
	It follows from \eqref{*} that $E[Z_{T_{n}}]\leq E[(X_{T_{n}})^{+}]<\infty$ for all $n\in\N$. Thus, by Theorem VI.80 of \cite{pot}, there exists a right continuous increasing predictable process $V^{Z}$ such that $Z-V^{Z}$ is a local martingale vanishing at zero. Moreover, there exists a sequence of stopping times $(R_{n};n\in\N)$ increasing to $\infty$ such that
	$$E\left[\int_{0}^{t\wedge R_{n}}{1_{\{X^{+}_{s}\neq0\}}dV_{s}^{Z}}\right]=E\left[\int_{0}^{t\wedge R_{n}}{1_{\{X^{+}_{s}\neq0\}}d(V_{s}^{Z}-Z_{s})}+\int_{0}^{t\wedge R_{n}}{1_{\{X^{+}_{s}\neq0\}}dZ_{s}}\right]$$
	$$\hspace{-0.5cm}=E\left[\int_{0}^{t\wedge R_{n}}{1_{\{X^{+}_{s}\neq0\}}dZ_{s}}\right].$$
	Hence,
	$$E\left[\int_{0}^{t\wedge R_{n}}{1_{\{X^{+}_{s}\neq0\}}dV_{s}^{Z}}\right]=E\left[\sum_{0<s\leq t\wedge R_{n}}{1_{\{X_{s}^{+}\neq0\}}1_{\{X_{s^{-}}>0\}}X_{s}^{-}}\right]=E\left[\sum_{0<s\leq t\wedge R_{n}}{1_{\{X_{s}>0\}}1_{\{X_{s^{-}}>0\}}X_{s}^{-}}\right].$$
	Thus,
	$$E\left[\int_{0}^{t\wedge R_{n}}{1_{\{X^{+}_{s}\neq0\}}dV_{s}^{Z}}\right]=0,$$
	since $1_{\{X_{s}>0\}}X_{s}^{-}=0$. This implies that $\int_{0}^{t}{1_{\{X_{s}^{+}\neq0\}}dV_{s}^{Z}}=0$. Therefore, $dV_{t}^{Z}$ is carried by $\{t\geq0;X_{t}^{+}=0\}$. Consequently,
	$$X^{+}_{t}=\left(\int_{0}^{t}{1_{\{X_{s^{-}}>0\}}dM_{s}}+(Z_{t}-V_{t}^{Z})\right)+\left(V_{t}^{Z}+\int_{0}^{t}{1_{\{X_{s^{-}}>0\}}dA_{s}}+\frac{1}{2}L_{t}^{0}\right)$$
	is a stochastic process of the class $(\Sigma)$.
	\item It is obvious that $(-X)$ is of the class $(\Sigma)$ and it has no positive jump. Then, from 3), $X^{-}=(-X)^{+}$ is also of the class $(\Sigma)$.
\end{enumerate}
\end{proof}


Now, we shall show that the product of processes of the class $(\Sigma)$ with vanishing quadratic covariation is also of the class $(\Sigma)$.

\begin{lem}
Let $(X_{t}^{1})_{t\geq0},\cdots,(X_{t}^{n})_{t\geq0}$ be processes of the class $(\Sigma)$ such that $[ X^{i},X^{j}]=0$ for $i\neq j$. Then, $(\Pi_{i=1}^{n}{X^{i}_{t}})_{t\geq0}$ is also of the class $(\Sigma)$.
\end{lem}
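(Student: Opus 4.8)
The plan is to argue by induction on $n$, the case $n=1$ being the hypothesis itself. For the inductive step I would set $Y=\prod_{i=1}^{n-1}X^i$, which is of the class $(\Sigma)$ by the induction hypothesis, and reduce the statement to the two-factor assertion: if $U,V$ are of the class $(\Sigma)$ with $[U,V]=0$, then $UV$ is of the class $(\Sigma)$. Applying this with $U=Y$ and $V=X^n$ closes the induction, provided I can verify the covariation hypothesis $[Y,X^n]=0$. Note also that each $X^i_0=M^i_0+A^i_0=0$, so every partial product starts at $0$, which is what condition 1 and the requirement $A_{0-}=A_0=0$ of Definition \ref{d1} demand for the product.

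I would establish the vanishing of $[Y,X^n]$ as a separate, purely bracket-algebraic fact, independent of the class $(\Sigma)$ structure. Writing $P_k=\prod_{i=1}^k X^i$, I claim that $[P_k,X^j]=0$ whenever $j>k$, and prove it by induction on $k$ via the product rule for quadratic covariations,
$$[P_k X^{k+1},X^j]=\int_0^\cdot (P_k)_{s^-}\,d[X^{k+1},X^j]_s+\int_0^\cdot (X^{k+1})_{s^-}\,d[P_k,X^j]_s+\sum_{s\leq\cdot}\Delta (P_k)_s\,\Delta X^{k+1}_s\,\Delta X^j_s.$$
For $j>k+1$ the first integral vanishes since $j\neq k+1$, the second vanishes by the induction hypothesis, and for the triple-jump term I would use that $[X^{k+1},X^j]\equiv0$ forces each jump $\Delta X^{k+1}_s\,\Delta X^j_s=\Delta[X^{k+1},X^j]_s$ to be zero. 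Taking $k=n-1$, $j=n$ yields $[Y,X^n]=0$.

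For the two-factor statement I would write $U=M^U+A^U$ and $V=M^V+A^V$ and apply integration by parts. Since $U_0=V_0=0$ and $[U,V]=0$, this gives
$$UV=\Big(\int_0^\cdot U_{s^-}\,dM^V_s+\int_0^\cdot V_{s^-}\,dM^U_s\Big)+\Big(\int_0^\cdot U_{s^-}\,dA^V_s+\int_0^\cdot V_{s^-}\,dA^U_s\Big).$$
The first bracket is a local martingale null at $0$, being a sum of stochastic integrals of the locally bounded predictable integrands $U_{-},V_{-}$ against local martingales; the second is an adapted predictable finite-variation process null at $0$. The crux is then condition 3 of Definition \ref{d1}, namely that this finite-variation part is carried by $\{UV=0\}$: since $dA^V$ is carried by $\{V=0\}$ and $dA^U$ by $\{U=0\}$, and both of these sets are contained in $\{UV=0\}$, the factor $1_{\{(UV)_s\neq0\}}$ annihilates each of the two Stieltjes integrals.

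The main point requiring care is precisely this carrying-set verification together with the jump bookkeeping. One must confirm that the left-limit integrands $U_{-},V_{-}$ introduced by integration by parts do not interfere with condition 3 (they do not, because the defining property of each $dA$ is a statement about the optional value at time $s$, which is exactly where $1_{\{(UV)_s\neq0\}}$ acts), and that the finite-variation part inherits predictability from $A^U,A^V$, using that a Stieltjes integral of a predictable locally bounded process against a predictable finite-variation process is again predictable. Once the pointwise relation $\Delta X^i_s\,\Delta X^j_s=0$ for $i\neq j$ is recorded, all these checks go through and the induction closes.
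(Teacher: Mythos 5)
Your proof is correct and follows essentially the same route as the paper: integration by parts for the two-factor case using $[U,V]=0$, splitting $UV$ into the local martingale part $\int_0^\cdot U_{s-}\,dM^V_s+\int_0^\cdot V_{s-}\,dM^U_s$ and the finite-variation part $\int_0^\cdot U_{s-}\,dA^V_s+\int_0^\cdot V_{s-}\,dA^U_s$, which is carried by $\{UV=0\}$, and then induction on $n$. The only difference is that you explicitly establish the covariation identity $[\prod_{i\le k}X^i,\,X^j]=0$ for $j>k$ (via the product rule for brackets and the observation that $[X^i,X^j]\equiv 0$ forces $\Delta X^i_s\,\Delta X^j_s=0$), a step the paper asserts without justification when it writes $[X^1X^2,X^3]=0$.
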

\begin{proof}
Since $[ X^{1},X^{2}]=0$, integration by parts yields
$$X^{1}_{t}X^{2}_{t}=\int_{0}^{t}{X_{s-}^{1}dX_{s}^{2}}+\int_{0}^{t}{X_{s-}^{2}dX_{s}^{1}},$$
i.e.,
$$X^{1}_{t}X^{2}_{t}=\left[\int_{0}^{t}{X_{s-}^{1}dM_{s}^{2}}+\int_{0}^{t}{X_{s-}^{2}dM_{s}^{1}}\right]+\left[\int_{0}^{t}{X_{s-}^{1}dA_{s}^{2}}+\int_{0}^{t}{X_{s-}^{2}dA_{s}^{1}}\right].$$
It is easy to see that $M_{t}=\int_{0}^{t}{X_{s-}^{1}dM_{s}^{2}}+\int_{0}^{t}{X_{s-}^{2}dM_{s}^{1}}$ is a càdlàg local martingale. Furthermore, the process $A_{t}=\int_{0}^{t}{X_{s-}^{1}dA_{s}^{2}}+\int_{0}^{t}{X_{s-}^{2}dA_{s}^{1}}$ is a finite variation process such that
$$dA_{t}=X_{t-}^{1}dA_{t}^{2}+X_{t-}^{2}dA_{t}^{1}$$
is carried by $\{t\geq0:X^{1}_{t}X^{2}_{t}=0\}$. Therefore, $X^{1}X^{2}$ is of the class $(\Sigma)$. If $n\geq3$, then $[ X^{1}X^{2},X^{3}]=0$, and we obtain the result by induction.
\end{proof}

In the next lemma, we derive a new property using the balayage formula for càdlàg semi-martingales.
\begin{lem}
Let $X=M+A$ be a process of the class $(\Sigma)$ and denote $\gamma_{t}=\sup\{s\leq t:X_{s}=0\}$. Then, for any bounded predictable process $K$, $K_{\gamma_{\cdot}}X$ is an element of the class $(\Sigma)$ and its finite variational part is given by $\int_{0}^{\cdot}{K_{s}dA_{s}}$.
\end{lem}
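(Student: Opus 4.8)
The plan is to run everything through the balayage formula for c\`adl\`ag semimartingales (Proposition~2.2 of \cite{siam}). Since $M_{0}=0$ and $A_{0}=0$ by Definition~\ref{d1}, we have $X_{0}=0$, so the formula applies with no boundary term and gives
$$K_{\gamma_{t}}X_{t}=\int_{0}^{t}{K_{\gamma_{s}}dX_{s}}=\int_{0}^{t}{K_{\gamma_{s}}dM_{s}}+\int_{0}^{t}{K_{\gamma_{s}}dA_{s}}.$$
Writing $Y=K_{\gamma_{\cdot}}X$, I would read off the candidate decomposition $Y=\widetilde{M}+\widetilde{A}$ with $\widetilde{M}_{t}=\int_{0}^{t}{K_{\gamma_{s}}dM_{s}}$ and $\widetilde{A}_{t}=\int_{0}^{t}{K_{\gamma_{s}}dA_{s}}$, and then check the three requirements of Definition~\ref{d1} in turn.

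The first step is to simplify $\widetilde{A}$. The crucial observation is that on the random set $\{s:X_{s}=0\}$ one has $\gamma_{s}=\sup\{u\leq s:X_{u}=0\}=s$, simply because $s$ itself belongs to the set over which the supremum is taken; this is insensitive to jumps. Since $K$ is bounded and, by hypothesis, $dA$ is carried by $\{s:X_{s}=0\}$, I can replace $K_{\gamma_{s}}$ by $K_{s}$ under the integral and obtain $\widetilde{A}_{t}=\int_{0}^{t}{K_{\gamma_{s}}dA_{s}}=\int_{0}^{t}{K_{s}dA_{s}}$, which is exactly the finite variation part claimed in the statement. It is predictable, c\`adl\`ag, of finite variation and null at $0$, because $K$ and $A$ are.

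Next I would verify the carrying condition $\int_{0}^{t}{1_{\{Y_{s}\neq0\}}d\widetilde{A}_{s}}=0$. Since $d\widetilde{A}_{s}=K_{s}dA_{s}$ and $dA$ lives on $\{s:X_{s}=0\}$, it suffices to note that $X_{s}=0$ forces $Y_{s}=K_{\gamma_{s}}X_{s}=0$; hence $\{X=0\}\subseteq\{Y=0\}$ and the indicator $1_{\{Y_{s}\neq0\}}$ vanishes on the support of $dA$. Together with $\widetilde{M}_{0}=0$, this gives that $Y$ is of the class $(\Sigma)$, once the martingale part is justified.

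The main obstacle is precisely that last point: showing that $\widetilde{M}_{t}=\int_{0}^{t}{K_{\gamma_{s}}dM_{s}}$ is a genuine (c\`adl\`ag) local martingale. Although $\gamma_{\cdot}$ is only adapted, the content of the balayage formula is that the balayage integrand $K_{\gamma_{\cdot}}$ is a locally bounded predictable process, constant along the excursion intervals of $X$ away from $0$, so its stochastic integral against the local martingale $M$ is again a local martingale. This is the part I would justify carefully, paying attention to the exact form of the c\`adl\`ag balayage formula --- in particular to whether left limits $K_{\gamma_{s-}}$ and jump corrections appear, and to checking that any such correction term is still carried by $\{X=0\}$ and thus merges into $\widetilde{A}$ without disturbing the identification of the finite variation part above.
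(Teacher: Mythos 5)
Your proposal is correct and follows essentially the same route as the paper's proof: apply the c\`adl\`ag balayage formula, use that $\gamma_{s}=s$ on the support of $dA$ to rewrite $K_{\gamma_{s}}dA_{s}=K_{s}dA_{s}$, and conclude by noting that $\int_{0}^{\cdot}{K_{\gamma_{s}}dM_{s}}$ is a local martingale while $K_{s}dA_{s}$ is carried by the zero set of $K_{\gamma_{\cdot}}X$. You are simply more explicit than the paper about verifying the conditions of Definition~\ref{d1} and about the predictability of the balayage integrand, which the paper takes for granted.
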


\begin{proof}
We obtain the following by applying the balayage formula to the càdlàg case:
$$K_{\gamma_{t}}X_{t}=K_{\gamma_{0}}X_{0}+\int_{0}^{t}{K_{\gamma_{s}}dX_{s}}=\int_{0}^{t}{K_{\gamma_{s}}dM_{s}}+\int_{0}^{t}{K_{\gamma_{s}}dA_{s}}.$$
Since $dA_{t}$ is carried by $\{t\geq0:X_{t}=0\}$, we have the identity $K_{\gamma_{s}}dA_{s}=K_{s}dA_{s}$. Therefore,
$$K_{\gamma_{t}}X_{t}=\int_{0}^{t}{K_{\gamma_{s}}dM_{s}}+\int_{0}^{t}{K_{s}dA_{s}}.$$
It is easy to see that $\int_{0}^{\cdot}{K_{\gamma_{s}}dM_{s}}$ is a local martingale and that $K_{t}dA_{t}$ is carried by $\{t\geq0:K_{\gamma_{t}}X_{t}=0\}$. This completes the proof.
\end{proof}

\begin{coro}\label{c4}
Let $X=M+A$ be a process of the class $(\Sigma)$ and $f$ be a bounded Borel function. Then, the process 
$$\left(f(A_{t})X_{t}-F(A_{t}):t\geq0\right)$$ 
is a local martingale, where $F(A_{t})=\int_{0}^{t}{f(A_{s})dA_{s}}$.
\end{coro}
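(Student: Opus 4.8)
The plan is to deduce the corollary directly from the preceding balayage lemma by specializing the bounded predictable integrand to $K_s = f(A_s)$. First I would verify that this choice is admissible: since $A$ is predictable by Definition \ref{d1} and $f$ is Borel, the process $f(A)$ is predictable, and it is bounded because $f$ is. The lemma then applies and, with this $K$, identifies the decomposition
$$f(A_{\gamma_t}) X_t = \int_0^t f(A_{\gamma_s}) dM_s + \int_0^t f(A_s) dA_s,$$
whose finite variation part $\int_0^t f(A_s) dA_s$ is exactly $F(A_t)$ by the definition of $F$.

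The crux of the argument is the identity $A_{\gamma_t} = A_t$ for every $t$. This rests on the carrying condition of the class $(\Sigma)$: because $dA$ is supported on $\{s : X_s = 0\}$ and $\gamma_t = \sup\{s \le t : X_s = 0\}$ is the last zero of $X$ up to time $t$, the process $X$ stays away from $0$ on $(\gamma_t, t]$, so $A$ charges no mass there and is constant on that interval. Hence $f(A_{\gamma_t}) = f(A_t)$, and substituting this into the displayed decomposition gives
$$f(A_t) X_t - F(A_t) = \int_0^t f(A_{\gamma_s}) dM_s,$$
which is precisely the local-martingale part furnished by the lemma and is therefore a local martingale.

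The step I expect to require the most care is the justification of $A_{\gamma_t} = A_t$ in the c\`adl\`ag setting, where one must check that $A$ contributes nothing on $(\gamma_t, t]$ through either its continuous part or a jump landing strictly after the last zero. The single condition $\int_0^\cdot 1_{\{X_s \neq 0\}} dA_s = 0$ controls both simultaneously, so this amounts to an unwinding of definitions rather than a genuine obstacle; the remaining measurability and boundedness bookkeeping for $f(A)$ is routine.
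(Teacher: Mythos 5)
Your proof is correct and follows exactly the route the paper intends: Corollary \ref{c4} is stated as an immediate specialization of the preceding balayage lemma with $K_s=f(A_s)$, and the only point needing attention, namely $A_{\gamma_t}=A_t$ (so that $f(A_{\gamma_t})X_t=f(A_t)X_t$ and $\int_0^t K_s\,dA_s=F(A_t)$), is the one you isolate and justify correctly from the carrying condition $\int_0^\cdot 1_{\{X_s\neq 0\}}\,dA_s=0$. No gaps.
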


In Proposition 2.1 of \cite{mult}, Nikeghbali showed that every continuous non-negative local submartingale $Y$ with $Y_{0}=0$ decomposes as
$$Y=MC-1,$$
where $M$ is a continuous non-negative local martingale with $M_{0}=1$ and $C$ is a continuous increasing process with $C_{0}=1$. We extend this result to positive stochastic processes satisfying Definition \ref{d1} in the following corollary.
\begin{coro}\label{c5}
Let $X=M+A$ be a non-negative process of the class $(\Sigma)$. Then, there exist a càdlàg non-decreasing process $C$ and a càdlàg positive local martingale $W$ with $W_{0}=1$ such that $\forall t\geq0$,
$$X_{t}=C_{t}W_{t}-1.$$
\end{coro}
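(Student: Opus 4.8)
The plan is to apply the preceding Corollary~\ref{c4} with a cleverly chosen function $f$ so as to convert the additive decomposition $X = M + A$ into the desired multiplicative form. The key observation is that when $X$ is non-negative and of the class $(\Sigma)$, the finite variation process $A$ is automatically non-decreasing: indeed $dA$ is carried by $\{X=0\}$, and since $X \geq 0$, an increment of $A$ at a zero of $X$ cannot push $X$ negative, forcing $dA \geq 0$. Thus $A$ is a c\`adl\`ag non-decreasing process with $A_0 = 0$, and this monotonicity is what makes a multiplicative representation possible.

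The main step is to specialize $f(x) = \frac{1}{(1+x)^2}$ (or equivalently work with $g(x)=\frac{1}{1+x}$) in Corollary~\ref{c4}. With this choice, $F(A_t) = \int_0^t \frac{dA_s}{(1+A_s)^2}$, and a direct computation using the fact that $A$ is non-decreasing and $dA$ lives on $\{X=0\}$ will show that the local martingale produced by Corollary~\ref{c4}, namely $f(A_t)X_t - F(A_t)$, can be reorganized into the form $\frac{X_t+1}{1+A_t} - 1$ up to the required manipulations. The idea is that $\frac{X_t+1}{1+A_t}$ is the candidate positive local martingale $W_t$ (note $W_0 = \frac{X_0+1}{1+A_0} = 1$ since $X_0=0$, $A_0=0$), and $C_t = 1+A_t$ is the candidate c\`adl\`ag non-decreasing process. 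One then sets $W_t = \tfrac{1+X_t}{1+A_t}$ and $C_t = 1+A_t$, and verifies algebraically that $C_t W_t - 1 = X_t$.

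Concretely, I would first establish that $A$ is non-decreasing as above; then show $W_t = \frac{1+X_t}{1+A_t}$ is a positive c\`adl\`ag local martingale by writing $1+X_t = (1+A_t) + M_t$ and applying It\^o's formula (or integration by parts) to the product $\frac{1}{1+A_t}(1+X_t)$, using that $A$ is of finite variation and $dA$ is carried by $\{X=0\}$ so that the drift terms of the form $\frac{-1}{(1+A_s)^2}(1+X_s)\,dA_s + \frac{1}{1+A_s}\,dA_s$ collapse to zero precisely on the set where $X_s = 0$. Positivity of $W$ follows from $X\geq 0$ and $A\geq 0$, and $W_0 = 1$ is immediate.

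The hard part will be handling the jumps rigorously: because $A$ is only predictable and c\`adl\`ag (not continuous, in this extended setting), the It\^o/integration-by-parts computation must be carried out with the jump terms retained, and one must check that the discrete contributions $\Delta(\frac{1}{1+A_s})(1+X_{s-}) + \frac{1}{1+A_s}\Delta A_s$ telescope correctly and vanish off $\{X=0\}$. The cleanest route is to avoid this altogether by invoking Corollary~\ref{c4} as a black box with $f(x)=\frac{1}{(1+x)^2}$, deducing that a specific local martingale exists, and then identifying it with $W_t - 1$ through the algebraic identity $\frac{1+X_t}{1+A_t} - 1 = \frac{X_t - A_t}{1+A_t} = \frac{M_t}{1+A_t}$; the only remaining task is to confirm that $\frac{M_t}{1+A_t}$ differs from the Corollary~\ref{c4} local martingale by an explicit local martingale, which reduces the jump bookkeeping to a short verification rather than a full stochastic-calculus derivation.
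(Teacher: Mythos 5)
Your overall strategy---apply Corollary~\ref{c4} with a well-chosen $f$ and read off a multiplicative decomposition $1+X=CW$---is exactly the paper's, and your preliminary observation that $A$ is non-decreasing is correct (the paper gets it from Lemma~\ref{p3}). But your concrete choice of $C$ and $W$ is wrong: $W_{t}=\frac{1+X_{t}}{1+A_{t}}$ is \emph{not} a local martingale. Writing $dX=dM+dA$ and integrating by parts (continuous case, for simplicity),
$$d\left(\frac{1+X_{t}}{1+A_{t}}\right)=\frac{dM_{t}}{1+A_{t}}+\frac{dA_{t}}{1+A_{t}}-\frac{(1+X_{t})\,dA_{t}}{(1+A_{t})^{2}}=\frac{dM_{t}}{1+A_{t}}+\frac{(A_{t}-X_{t})\,dA_{t}}{(1+A_{t})^{2}},$$
and on the support of $dA$ one has $X_{t}=0$, so the surviving drift is $\frac{A_{t}\,dA_{t}}{(1+A_{t})^{2}}$, which does not vanish: for $X=|B|$ and $A=L$ the process $\frac{1+|B_{t}|}{1+L_{t}}$ is a genuine (local) submartingale with increasing part $\int_{0}^{L_{t}}\frac{u}{(1+u)^{2}}\,du>0$. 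More generally, if you seek $C_{t}=g(A_{t})$ with $W=(1+X)/C$ a local martingale, the drift left over on $\{X=0\}$ is $\frac{g(A_{t})-g'(A_{t})}{g(A_{t})^{2}}\,dA_{t}$, so you are forced to $g'=g$, $g(0)=1$, i.e.\ $g(x)=e^{x}$. That is precisely the paper's choice: taking $f(x)=e^{-x}$ in Corollary~\ref{c4} shows that $e^{-A_{t}}(1+X_{t})-1$ is a local martingale, whence $W_{t}=e^{-A_{t}}(1+X_{t})$ and $C_{t}=e^{A_{t}}$.

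Your fallback identification also fails algebraically: with $f(x)=(1+x)^{-2}$, Corollary~\ref{c4} produces the local martingale $\frac{X_{t}}{(1+A_{t})^{2}}-\int_{0}^{t}\frac{dA_{s}}{(1+A_{s})^{2}}$, which in the continuous case equals $\frac{X_{t}}{(1+A_{t})^{2}}+\frac{1}{1+A_{t}}-1$, not $\frac{1+X_{t}}{1+A_{t}}-1$; the difference $\frac{X_{t}}{1+A_{t}}-\frac{X_{t}}{(1+A_{t})^{2}}=\frac{A_{t}X_{t}}{(1+A_{t})^{2}}$ is not a local martingale. So the obstruction is not the jump bookkeeping you flag at the end, but the choice of multiplier itself. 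Replace $1/(1+x)$ by $e^{-x}$ (equivalently $C_{t}=e^{A_{t}}$) and the rest of your outline---including the remark that $W_{0}=1$ and positivity of $W$ from $X\geq 0$---goes through as in the paper.
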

\begin{proof}
According to Lemma \ref{p3}, $X$ is a submartingale and $A$ is a non-decreasing process. Since the function $f$ defined by $f(x)=e^{-x}$ is a bounded Borel function on $[0,+\infty[$, it follows from Corollary \ref{c4} that 
$$\left(e^{-A_{t}}(X_{t}+1)-1:t\geq0\right)$$
 is a càdlàg local martingale that is null at zero. Then, 
$$W=\left(e^{-A_{t}}(X_{t}+1):t\geq0\right)$$
is a positive local martingale with $W_{0}=1$. Therefore, taking $C_{t}=e^{A_{t}}$, we obtain that $\forall t\geq0$,
$$X_{t}=C_{t}W_{t}-1.$$
This completes the proof.
\end{proof}

{\subsection{Extension of the martingale characterization}}

In this subsection, we generalize some known results subsequent to the martingale characterization of processes of the class $(\Sigma)$. Let us begin with those of Lemma 2.3 of \cite{pat}.
\begin{theorem}\label{t2}
Let $X=M+A$ be a process of the class $(\Sigma)$ and $A^{c}$ be the continuous part of $A$. For every $\mathcal{C}^{1}$ function $f$ and a function $F$ defined by $F(x)=\int_{0}^{x}{f(z)dz}$, the process 
$$\left(F(A^{c}_{t})-f(A^{c}_{t})X_{t}+\sum_{s\leq t}{[f(A^{c}_{s})-f^{'}(A^{c}_{s})X_{s}]\Delta A_{s}};t\geq0\right)$$
is a càdlàg local martingale.
\end{theorem}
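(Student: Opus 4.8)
The plan is to apply Itô's formula to the process $F(A^c_t)$ and then to the product $f(A^c_t)X_t$, exploiting the structure $X=M+A$ together with the defining property that $dA$ is carried by the zero set of $X$. The key algebraic cancellation should come from precisely this support condition, mirroring the way Corollary~\ref{c4} was obtained.

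First I would compute the finite-variation contributions. Since $A^c$ is continuous with finite variation, the chain rule gives $dF(A^c_t)=f(A^c_t)\,dA^c_t$ and, more importantly, $f(A^c_t)$ is itself a continuous finite-variation process so its Itô expansion involves only $f'(A^c_t)\,dA^c_t$. Next I would expand $d\big(f(A^c_t)X_t\big)$ by integration by parts for càdlàg semimartingales: the continuous bracket $[f(A^c),X]$ vanishes because $A^c$ has finite variation, so $d\big(f(A^c_t)X_t\big)=f(A^c_{t-})\,dX_t+X_{t-}\,df(A^c_t)+\Delta\big(f(A^c_t)X_t\big)-\big(\text{jump terms}\big)$. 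Writing $dX_t=dM_t+dA_t=dM_t+dA^c_t+\Delta A_t$, I would separate the martingale part $f(A^c_{t-})\,dM_t$ (which I set aside as a local martingale) from the finite-variation and jump parts.

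The main obstacle, and the step I expect to require the most care, is the bookkeeping of the jumps. Note that $A^c$ is continuous, so $f(A^c)$ has no jumps; hence all jumps of $f(A^c_t)X_t$ arise from $X$, and $\Delta\big(f(A^c_t)X_t\big)=f(A^c_t)\Delta X_t=f(A^c_t)\Delta M_t+f(A^c_t)\Delta A_t$. The summed jump correction $\sum_{s\le t}[f(A^c_s)-f'(A^c_s)X_s]\Delta A_s$ appearing in the statement must be reconciled against the jump terms produced by integration by parts; in particular one must check that the $X_{s-}f'(A^c_s)\,dA^c_s$ term combines with the $\Delta A_s$ sum so that everything reduces to $dA^c_t$-integrals against $X$, and then those are killed by the support condition. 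Concretely, the continuous finite-variation terms are $f(A^c_t)\,dA^c_t$ (from $dF(A^c)$), $-f(A^c_{t-})\,dA^c_t$ and $-X_{t-}f'(A^c_t)\,dA^c_t$ (from the product expansion); the first two cancel since $A^c$ is continuous, while the last is an integrand multiplied by $dA^c_t$, which I would argue vanishes using $\int_0^t 1_{\{X_s\neq0\}}\,dA^c_s=0$ exactly as in the proof of Lemma~\ref{p3}, so that on the support of $dA^c$ one has $X_s=0$ and the term drops.

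Assembling these pieces, the continuous finite-variation part cancels entirely, the jump contributions organize into the displayed sum $\sum_{s\le t}[f(A^c_s)-f'(A^c_s)X_s]\Delta A_s$, and what remains besides the stated expression is the stochastic integral $\int_0^\cdot f(A^c_s)\,dM_s$, a local martingale. Hence the process in the statement, being the difference of the genuine semimartingale decomposition and its local-martingale part after all finite-variation cancellations, is itself a càdlàg local martingale. I would close the argument by noting that the local-martingale property is preserved because $f$ is $\mathcal{C}^1$ and hence $f(A^c)$ is locally bounded, so the integrand $f(A^c_{s})$ is locally bounded and the stochastic integral against $M$ is well defined as a local martingale.
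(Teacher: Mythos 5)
Your overall architecture is the paper's: integrate by parts on $f(A^{c}_{t})X_{t}$, use that $F(A^{c}_{t})=\int_{0}^{t}f(A^{c}_{s})\,dA^{c}_{s}$, and let the support condition on $dA$ produce the cancellation, leaving $-\int_{0}^{\cdot}f(A^{c}_{s})\,dM_{s}$. But the crucial cancellation step is misstated in a way that leaves a real gap. After the expansion, the terms that must vanish are
$$\int_{0}^{t}f'(A^{c}_{s})X_{s}\,dA^{c}_{s}+\sum_{s\leq t}f'(A^{c}_{s})X_{s}\,\Delta A_{s}=\int_{0}^{t}f'(A^{c}_{s})X_{s}\,dA_{s},$$
i.e.\ the continuous piece and the atomic piece recombine into a $dA$-integral, which is killed by condition~3 of Definition~\ref{d1}. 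You instead assert that ``everything reduces to $dA^{c}_{t}$-integrals against $X$'' and kill them with $\int_{0}^{t}1_{\{X_{s}\neq0\}}\,dA^{c}_{s}=0$. That is wrong on two counts. First, that condition is \emph{not} a hypothesis of Theorem~\ref{t2}: in this paper it appears only as an \emph{additional assumption} in Lemma~\ref{p3} and Corollary~\ref{c6}, so citing ``the proof of Lemma~\ref{p3}'' does not supply it. Second, even granting it, your bookkeeping leaves the purely atomic term $\sum_{s\leq t}f'(A^{c}_{s})X_{s}\,\Delta A_{s}$ dangling: it does not cancel against the $+\sum f(A^{c}_{s})\Delta A_{s}$ coming from $f(A^{c}_{s})\,dX_{s}$, and you never show it vanishes.

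The gap is fixable in one line, in either of two ways. The clean route (the paper's) is to not split $dA$ at all at that stage: write $X_{s}f'(A^{c}_{s})\,dA^{c}_{s}=X_{s}f'(A^{c}_{s})\,dA_{s}-X_{s}f'(A^{c}_{s})\,\Delta A_{s}$ and use $\int_{0}^{t}f'(A^{c}_{s})X_{s}\,dA_{s}=0$ directly from the definition. Alternatively, you may legitimately derive both facts you need -- $1_{\{X\neq0\}}\,dA^{c}=0$ \emph{and} $X_{s}\Delta A_{s}=0$ for all $s$ -- by observing that the continuous and purely atomic parts of the signed measure $1_{\{X_{s}\neq0\}}\,dA_{s}$ are mutually singular, so each must vanish separately when their sum does; but that observation must be stated, since the paper itself treats the $dA^{c}$ condition as an extra hypothesis elsewhere. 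As written, the proposal neither recombines the measures nor justifies the two separate vanishing statements, so the key step does not close.
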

\begin{proof}
Through integration by parts, we get
$$f(A^{c}_{t})X_{t}=\int_{0}^{t}{f(A^{c}_{s})dX_{s}}+\int_{0}^{t}{f^{'}(A^{c}_{s})X_{s-}dA^{c}_{s}}.$$
However, we have  
$$\int_{0}^{t}{f^{'}(A^{c}_{s})X_{s-}dA^{c}_{s}}=\int_{0}^{t}{f^{'}(A^{c}_{s})X_{s}dA^{c}_{s}}$$
since $A^{c}$ is a continuous process. Hence,
$$f(A^{c}_{t})X_{t}=\int_{0}^{t}{f(A^{c}_{s})dX_{s}}+\int_{0}^{t}{f^{'}(A^{c}_{s})X_{s}dA^{c}_{s}},$$
i.e.,
$$f(A^{c}_{t})X_{t}=\int_{0}^{t}{f(A^{c}_{s})dX_{s}}+\int_{0}^{t}{f^{'}(A^{c}_{s})X_{s}dA_{s}}-\sum_{s\leq t}{f^{'}(A^{c}_{t})X_{s}\Delta A_{s}}$$
because $A=A^{c}+\sum_{s\leq t}{\Delta A_{s}}$. Furthermore, we have $\int_{0}^{t}{f^{'}(A^{c}_{s})X_{s}dA_{s}}=0$ since $dA$ is carried by $\{t\geq0:X_{t}=0\}$. Therefore, it follows that
$$f(A^{c}_{t})X_{t}=\int_{0}^{t}{f(A^{c}_{s})dX_{s}}-\sum_{s\leq t}{f^{'}(A^{c}_{t})X_{s}\Delta A_{s}}$$
$$\hspace{2cm}=\int_{0}^{t}{f(A^{c}_{s})dM_{s}}+\int_{0}^{t}{f(A^{c}_{s})dA^{c}_{s}}+\sum_{s\leq t}{[f(A^{c}_{s})-f^{'}(A^{c}_{s})X_{s}]\Delta A_{s}}.$$
Consequently,
$$f(A^{c}_{t})X_{t}=\int_{0}^{t}{f(A^{c}_{s})dM_{s}}+F(A^{c}_{t})+\sum_{s\leq t}{[f(A^{c}_{s})-f^{'}(A^{c}_{s})X_{s}]\Delta A_{s}}.$$
This implies that
$$F(A^{c}_{t})+\sum_{s\leq t}{[f(A^{c}_{s})-f^{'}(A^{c}_{s})X_{s}]\Delta A_{s}}-f(A^{c}_{t})X_{t}=-\int_{0}^{t}{f(A^{c}_{s})dM_{s}}.$$
This completes the proof.
\end{proof}

\begin{coro}\label{c6}
Let $X=M+A$ be a process of the class $(\Sigma)$ such that the continuous part $A^{c}$ of the process $A$ satisfies the following: $\forall t\geq0$, 
$\int_{0}^{t}{1_{\{X_{s}\neq0\}}dA^{c}_{s}}=0$. Then, $f(A^{c})X$ is also of the class $(\Sigma)$. Further, its  finite variational part is given by
$$V_{t}=F(A^{c}_{t})+\sum_{s\leq t}{f(A^{c}_{s})\Delta A_{s}}.$$
\end{coro}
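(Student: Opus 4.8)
The plan is to read the decomposition of $f(A^c)X$ straight off Theorem \ref{t2} and then show that, under the additional hypothesis, the jump sum collapses to the announced finite variation part. Rearranging the identity established inside the proof of Theorem \ref{t2}, one has
$$f(A^c_t)X_t=\int_0^t f(A^c_s)dM_s+F(A^c_t)+\sum_{s\le t}[f(A^c_s)-f'(A^c_s)X_s]\Delta A_s,$$
so $f(A^c)X$ is already exhibited as the sum of the c\`adl\`ag local martingale $\widetilde M_t=\int_0^t f(A^c_s)dM_s$ (null at $0$) and a predictable c\`adl\`ag finite variation process. The whole argument then reduces to simplifying this finite variation part and to checking that its differential is carried by the zero set of $f(A^c)X$.

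Next I would exploit the extra assumption $\int_0^t 1_{\{X_s\ne0\}}dA^c_s=0$. Subtracting it from condition 3 of Definition \ref{d1}, namely $\int_0^t 1_{\{X_s\ne0\}}dA_s=0$, leaves $\sum_{s\le t}1_{\{X_s\ne0\}}\Delta A_s=0$ for all $t$. Since this pure-jump function is identically zero, each of its jumps vanishes, so $\Delta A_s\neq0$ forces $X_s=0$; hence $X_s\Delta A_s=0$ for every $s$ and therefore $\sum_{s\le t}f'(A^c_s)X_s\Delta A_s=0$. Consequently the finite variation part above reduces to $V_t=F(A^c_t)+\sum_{s\le t}f(A^c_s)\Delta A_s$, which is precisely the claimed expression.

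It then remains to verify that $X'=f(A^c)X$ satisfies Definition \ref{d1} with finite variation part $V$. The local martingale part $\widetilde M$, together with the predictability, c\`adl\`ag regularity and vanishing at $0$ of $V$, are immediate, so the only genuine point is that $dV$ is carried by $\{t\ge0:f(A^c_t)X_t=0\}$. Here I would rewrite $V$ as a single Stieltjes integral: since $A^c$ is continuous the change of variables $F(A^c_t)=\int_0^t f(A^c_s)dA^c_s$ holds, whence $V_t=\int_0^t f(A^c_s)dA^c_s+\sum_{s\le t}f(A^c_s)\Delta A_s=\int_0^t f(A^c_s)dA_s$ and $dV_s=f(A^c_s)dA_s$. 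Because $dA$ is carried by $\{X=0\}$ and $\{f(A^c)X\neq0\}\subseteq\{X\neq0\}$, we obtain $\int_0^t 1_{\{f(A^c_s)X_s\ne0\}}dV_s=\int_0^t 1_{\{f(A^c_s)X_s\ne0\}}f(A^c_s)dA_s=0$, which is the desired carrying property. The main obstacle is the jump bookkeeping of the second paragraph, i.e. making sure the hypothesis really forces $X=0$ at every jump of $A$ so that the $f'(A^c)X\Delta A$ contribution disappears; once this is settled, the rest is a routine identification of the Stieltjes differential.
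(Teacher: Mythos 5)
Your proposal is correct and follows essentially the same route as the paper: both start from the identity of Theorem \ref{t2}, use the extra hypothesis on $A^{c}$ together with condition 3 of Definition \ref{d1} to kill the $\sum_{s\le t} f'(A^{c}_{s})X_{s}\Delta A_{s}$ term, and read off the decomposition of $f(A^{c})X$. Your final paragraph, identifying $dV_{s}=f(A^{c}_{s})dA_{s}$ and checking that it is carried by $\{f(A^{c})X=0\}$, is a step the paper leaves implicit, so your write-up is in fact slightly more complete.
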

\begin{proof}
According to Theorem \ref{t2}, the process $W$ defined by $\forall t\geq0$,
$$W_{t}=F(A^{c}_{t})-f(A^{c}_{t})X_{t}+\sum_{s\leq t}{[f(A^{c}_{s})-f^{'}(A^{c}_{s})X_{s}]\Delta A_{s}}$$
is a càdlàg local martingale. Since $dA$ is carried by $\{t\geq0:X_{t}=0\}$, we get
$$\int_{0}^{t}{X_{s}dA_{s}}=\int_{0}^{t}{X_{s}dA^{c}_{s}}+\sum_{s\leq t}{X_{s}\Delta A_{s}}=0,$$
i.e., $\sum_{s\leq t}{X_{s}\Delta A_{s}}=0$ since $\int_{0}^{t}{X_{s}dA^{c}_{s}}=0$. Thus,
$$\sum_{s\leq t}{f^{'}(A^{c}_{s})X_{s}\Delta A_{s}}=0.$$
Therefore,
$$W_{t}=F(A^{c}_{t})-f(A^{c}_{t})X_{t}+\sum_{s\leq t}{f(A^{c}_{s})\Delta A_{s}}.$$
Consequently,
$$f(A^{c}_{t})X_{t}=-W_{t}+F(A^{c}_{t})+\sum_{s\leq t}{f(A^{c}_{s})\Delta A_{s}}.$$
This gives the result.
\end{proof}

\begin{rem}
Theorem \ref{t2} and Corollary \ref{c6} are natural extensions of Lemma 2.3 obtained by Cheridito et al. \cite{pat} for continuous $A$.
\end{rem}

Now, we shall present an extension of the martingale characterization to non-negative submartingales (Theorem 2.1 of \cite{nik}).

\begin{theorem}\label{t3}
Let $X=M+A$ be a positive semi-martingale. Then, the following are equivalent:
\begin{enumerate}
  \item $X\in(\Sigma)$;
	\item There exists a non-decreasing predictable process $V$ such that for any $F\in C^2$, the process 
	$$\left(F(V^{c}_{t})-F^{'}(V^{c}_{t})X_{t}+\sum_{s\leq t}{[F^{'}(V^{c}_{s})-F^{''}(V^{c}_{s})X_{s}]\Delta V_{s}};t\geq0\right)$$
is a càdlàg local martingale and $V\equiv A$.
\end{enumerate}
\end{theorem}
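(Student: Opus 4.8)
The plan is to establish the two implications separately, with the forward direction $(1)\Rightarrow(2)$ being essentially a direct application of Theorem \ref{t2} and the reverse direction $(2)\Rightarrow(1)$ requiring us to reverse-engineer the class $(\Sigma)$ conditions from the martingale property.

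For $(1)\Rightarrow(2)$, I would first take $V=A$, which immediately gives $V\equiv A$ and makes $V$ a predictable finite-variation process. Since $X$ is positive and of the class $(\Sigma)$, Lemma \ref{p3} (applied to the case where $X=X^+$) shows that $X$ is a local submartingale, so $A$ is non-decreasing and predictable as required. The martingale property of the displayed process then follows directly from Theorem \ref{t2}: taking $f=F'$ (so that $F(x)=\int_0^x f(z)\,dz$ up to the appropriate constant, since $F\in C^2$ means $F'\in C^1$), the process $\bigl(F(A^c_t)-F'(A^c_t)X_t+\sum_{s\leq t}[F'(A^c_s)-F''(A^c_s)X_s]\Delta A_s\bigr)$ is exactly the local martingale guaranteed by that theorem. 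So this direction is almost immediate once $V=A$ is identified.

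For the harder direction $(2)\Rightarrow(1)$, I would exploit the freedom in choosing $F$. The key idea is to test the hypothesis with well-chosen functions to peel off the structural conditions of Definition \ref{d1}. First, taking $F(x)=x$ (so $F'=1$, $F''=0$) shows that $V^c_t-X_t=V^c_t-M_t-A_t$ is a local martingale; combined with $V\equiv A$ this should force the decomposition to align $M$ with the martingale part and identify the continuous structure. Then, to extract the support condition $\int_0^t 1_{\{X_s\neq 0\}}dA_s=0$, I would take $F(x)=x^2$ (giving $F'=2x$, $F''=2$), producing a second local martingale; differencing appropriately against the linear case and using integration by parts should yield an expression whose finite-variation part must vanish, which translates into the statement that $X_s\,dV_s=0$, i.e., $dV$ (hence $dA$) is carried by the zero set of $X$.

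The main obstacle will be the reverse direction, specifically justifying rigorously that the martingale identities for the family of test functions $F$ collectively force $dA$ to be supported on $\{X=0\}$ rather than merely constraining moments. The subtlety is that we are working with c\`adl\`ag processes, so the jump sums $\sum_{s\leq t}[\cdots]\Delta V_s$ interact with the continuous part $V^c$ in a nontrivial way, and I must be careful to separate the continuous and purely discontinuous contributions. I expect to handle this by applying It\^o's formula for c\`adl\`ag semimartingales to $F(V^c_t)$ and $F(V^c_t)X_t$ independently, then matching the bounded-variation terms against the hypothesized martingale to isolate $\int_0^t F''(V^c_s)X_s\,dV^c_s$ and the jump corrections; the arbitrariness of $F''$ then forces $X_s\,dV^c_s=0$ on the continuous part, while a separate argument on the jumps (using the predictability of $V$ and the given jump sum) handles the discontinuous part, together yielding condition (3) of Definition \ref{d1}.
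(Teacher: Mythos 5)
Your proposal follows essentially the same route as the paper's proof: for $(1)\Rightarrow(2)$ you take $V=A$ and invoke Theorem \ref{t2}, and for $(2)\Rightarrow(1)$ you test with $F(x)=x$ to identify $V\equiv A$ via uniqueness of the Doob--Meyer decomposition and then with $F(x)=x^2$, using integration by parts, to conclude $\int_0^t X_s\,dV_s=0$, i.e., that $dA$ is carried by $\{X=0\}$. This matches the paper's argument step for step, so the proposal is correct.
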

\begin{proof}
$(1)\Rightarrow(2)$ Let us take $V=A$. Hence, from Theorem \ref{t2}, we get that
$$\left(F(A^{c}_{t})-F^{'}(A^{c}_{t})X_{t}+\sum_{s\leq t}{[F^{'}(A^{c}_{s})-F^{''}(A^{c}_{s})X_{s}]\Delta A_{s}};t\geq0\right)$$
is a càdlàg local martingale.\\
$(2)\Rightarrow(1)$ First, let us take $F(x)=x$. Then, the process $W$ defined by 
$$W_{t}=V^{c}_{t}+\sum_{s\leq t}{\Delta V_{s}}-X_{t}=V_{t}-X_{t}$$
is a local martingale. Hence, by the uniqueness of the Doob--Meyer decomposition, we get $V=A$. Next, we take $F(x)=x^{2}$. Then, the process $B$ defined by 
$$B_{t}=(V_{t}^{c})^{2}-2V_{t}^{c}X_{t}+2\sum_{s\leq t}{V_{s}^{c}\Delta V_{s}}-2\sum_{s\leq t}{X_{s}\Delta V_{s}}$$
is a local martingale. However, through integration by part, it follows that
$$B_{t}=2\int_{0}^{t}{V^{c}_{s}dV^{c}_{s}}-2\int_{0}^{t}{V^{c}_{s}dX_{s}}-2\int_{0}^{t}{X_{s}dV^{c}_{s}}+2\sum_{s\leq t}{V_{s}^{c}\Delta V_{s}}-2\sum_{s\leq t}{X_{s}\Delta V_{s}}$$
$$\hspace{-0.75cm}=2\int_{0}^{t}{V^{c}_{s}d\left(V^{c}_{s}+\sum_{u\leq s}{\Delta V_{u}}-X_{s}\right)}-2\int_{0}^{t}{X_{s}d\left(V^{c}_{s}+\sum_{u\leq s}{\Delta V_{u}}\right)}$$
$$\hspace{-6cm}=2\int_{0}^{t}{V^{c}_{s}dW_{s}}-2\int_{0}^{t}{X_{s}dV_{s}}.$$
Consequently, we must have
$$\int_{0}^{t}{X_{s}dV_{s}}=0.$$
In other words, $dA$ is carried  by the set $\{t\geq0:X_{t}=0\}$.
\end{proof}

{\section{Contribution to continuous semi-martingales of the class \texorpdfstring{$(\Sigma)$}{sigma}}\label{s3}}

Now, we study continuous processes of the class $(\Sigma)$. First, we state some new characterization results. Second, we construct solutions for skew Brownian motion equations. A well-known continuous process of the class $(\Sigma)$ is the absolute value of a Brownian motion $|B|$. Bouhadou and Ouknine \cite{siam} constructed a solution for the inhomogeneous skew Brownian motion equation using the process $|B|$. Our contribution to this topic is to extend the construction of Bouhadou and Ouknine to all continuous processes of the class $(\Sigma)$.

For the readers' benefit, we first recall some useful results and terminologies.

\subsection{Recalling useful results}
We begin by defining two stochastic processes that are important for the present study. First, we remark that for any continuous semi-martingale $Y$, the set $\mathcal{W}=\{t\geq0; Y_{t}=0\}$ cannot be ordered. However, the set $\R_{+}\setminus\mathcal{W}$ can be decomposed as a countable union $\cup_{n\N}{J_{n}}$ of intervals $J_{n}$. Each interval $J_{n}$ corresponds to some excursion of $Y$. In other words, if $J_{n}=]g_{n},d_{n}[$, $Y_{t}\neq0$ for all $t\in]g_{n},d_{n}[$ and $Y_{g_{n}}=Y_{d_{n}}=0$. For any constant $\alpha\in[0,1]$, we consider a sequence $(\xi_n)$ of i.i.d. Bernoulli variables such that
$$P(\zeta_{n}=1)=\alpha\text{ and }P(\zeta_{n}=-1)=1-\alpha.$$
Now, let us define the process $Z^{\alpha}$ as follows.
\begin{equation}\label{zalpha}
	Z^{\alpha}_{t}=\sum_{n=0}^{+\infty}{\zeta_{n}1_{]g_{n},d_{n}[}(t)}.
\end{equation}

If we assume that $\alpha$ is a piecewise constant function associated with a partition $(0=t_{0}<t_{1}<\cdots<t_{n-1}<t_{m})$, i.e., $\alpha$ is of the form
$$\alpha(t)=\sum_{i=0}^{m}{\alpha_{i}1_{[t_{i},t_{i+1})}(t)},$$
where $\alpha_{i}\in[0,1]$ for all $i=0,1,\cdots,m$, then we shall consider the process
\begin{equation}\label{Zalpha}
	\mathcal{Z}^{\alpha}_{t}=\sum_{n=0}^{+\infty}{\sum_{i=0}^{m}{\zeta^{i}_{n}1_{]g_{n},d_{n}[\cap[t-{i},t_{i+1})}(t)}},
\end{equation}
where $(\zeta^{i}_{n})_{n\geq0}$, $i=1,2,\cdots,m$, are $m$ independent sequences of independent variables such that
$$\P(\zeta^{i}_{n}=1)=\alpha_{i}\text{ and }\P(\zeta^{i}_{n}=-1)=1-\alpha_{i}.$$

The balayage formula, especially the balayage formula for continuous semi-martingales in the progressive case, is a critical tool in this study. We recall it below.

\begin{prop}\label{balpro}
Let $Y$ be a continuous semi-martingale and $\gamma^{'}_{t}=\sup\{s\leq t:Y_{s}=0\}$. Let $k$ be a bounded progressive process, where ${^{p}k_{\cdot}}$ denotes its predictable projection. Then,
$$k_{\gamma^{'}_{t}}Y_{t}=k_{0}Y_{0}+{\int_{0}^{t}{^{p}k_{\gamma^{'}_{s}}dY_{s}}+R_{t}},$$
where $R$ is an adapted, continuous process with bounded variations such that $dR_{t}$ is carried by the set $\{Y_{s}=0\}$.
\end{prop}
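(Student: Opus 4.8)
The plan is to obtain this progressive balayage formula by bootstrapping from the classical balayage formula for \emph{predictable} integrands and then correcting for the passage to a progressively measurable $k$ via its predictable projection ${}^{p}k$. First I would recall the predictable version: if $h$ is bounded and predictable, then the process $s\mapsto h_{\gamma'_{s}}$ is itself predictable (on each excursion interval $]g_{n},d_{n}[$ it is frozen at the value $h_{g_{n}}$, and the relevant left-endpoint times are hitting times of $\{0\}$, which are predictable for the continuous $Y$), and one has the clean identity $h_{\gamma'_{t}}Y_{t}=h_{0}Y_{0}+\int_{0}^{t}h_{\gamma'_{s}}\,dY_{s}$ with \emph{no} extra finite-variation term. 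This is the standard balayage formula, which I would establish by checking it on elementary predictable processes $h_{s}=\xi\,1_{]u,\infty[}(s)$ with $\xi$ bounded and $\mathcal F_{u}$-measurable and then extending by the functional monotone class theorem.

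Next I would write $Y=M^{Y}+A^{Y}$ for the decomposition of $Y$ into its continuous local-martingale part $M^{Y}$ and its continuous finite-variation part $A^{Y}$, and set $K_{s}:=k_{\gamma'_{s}}$ for the (bounded, progressively measurable) balayage of $k$. The structural observation driving everything is that $K$ is constant on the interior of every excursion interval and therefore varies only at the excursion endpoints, all of which lie in $\{Y=0\}$, while $Y_{\gamma'_{s}}=0$ for every $s$. Consequently the product $K_{t}Y_{t}$ is continuous even though $K$ alone is not, since the discontinuities of $K$ occur exactly where $Y$ vanishes. I would then define the candidate remainder $R_{t}:=k_{\gamma'_{t}}Y_{t}-k_{0}Y_{0}-\int_{0}^{t}{}^{p}k_{\gamma'_{s}}\,dY_{s}$ and aim to prove that $R$ is continuous, of finite variation, with $dR$ carried by $\{Y=0\}$.

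The heart of the argument is to identify the \emph{martingale part} of the semimartingale $k_{\gamma'_{t}}Y_{t}$ as $\int_{0}^{t}{}^{p}k_{\gamma'_{s}}\,dM^{Y}_{s}$; that is, to show that replacing the progressive integrand $K$ by its predictable projection is precisely what isolates the continuous-martingale contribution, so that $R$ carries no local-martingale part. The key tool is the predictable projection theorem together with the continuity of the bracket $[M^{Y}]$, which forces integrals against $M^{Y}$ to see their integrands only through the Dol\'eans measure of a continuous (hence predictable) increasing process and thereby lets me transfer from $K$ to its predictable projection ${}^{p}k_{\gamma'}$. I would carry this out by approximating $\gamma'$ by the piecewise-constant last-exit times $\gamma^{(n)}$ associated with a sequence of refining partitions: on each partition $k_{\gamma^{(n)}}$ is piecewise predictable, the predictable balayage formula of the first step applies verbatim, and passing to the limit produces ${}^{p}k$ in the stochastic integral while the discrepancies accumulate into a finite-variation term. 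Once the martingale part is pinned down, $R$ is of finite variation by difference, and it is carried by $\{Y=0\}$ because all the variation of $K$, hence of $R$, is created at the excursion endpoints, where $Y=0$.

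The main obstacle is exactly this identification: proving that the correction created by replacing $k$ with ${}^{p}k$ contributes no martingale part and that the resulting finite-variation remainder lives on the zero set. One cannot simply subtract the predictable balayage formula applied to ${}^{p}k$, since $\bigl(k_{\gamma'_{t}}-{}^{p}k_{\gamma'_{t}}\bigr)Y_{t}$ is generally nonzero on the excursions where $Y\neq0$; the two processes $k_{\gamma'_{t}}Y_{t}$ and $({}^{p}k)_{\gamma'_{t}}Y_{t}$ are genuinely different semimartingales, so $k_{\gamma'_{t}}Y_{t}$ must be analyzed \emph{directly} rather than reduced to the predictable case. Controlling the limit in the partition approximation — ensuring the remainders converge to a finite-variation process supported on $\{Y=0\}$ and do not leak into the martingale part — is the delicate step, and it is here that the continuity of $Y$, and hence of $[M^{Y}]$, is used decisively.
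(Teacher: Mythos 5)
The paper itself gives no proof of this proposition: it is stated in the subsection ``Recalling useful results'' and attributed to the classical balayage literature (Meyer--Stricker--Yor \cite{MSY}, El Karoui \cite{el}; it is also the form used by Bouhadou and Ouknine \cite{siam}), so there is no in-paper argument to compare yours against. Judged on its own terms, your text is a plan rather than a proof: the step you yourself flag as ``the delicate step'' --- identifying the martingale part of $k_{\gamma'_{t}}Y_{t}$ as $\int_{0}^{t}{}^{p}k_{\gamma'_{s}}\,dM^{Y}_{s}$ by approximating $\gamma'$ along refining partitions and ``controlling the limit'' --- is precisely the content of the proposition, and it is never carried out. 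Everything else is then deduced from that unproven identification; note in particular that writing $R$ as a difference of semimartingales only shows $R$ is a semimartingale, and $R$ has finite variation exactly because its local-martingale part vanishes, which is the claim at issue. So the finite-variation property, and with it the support statement for $dR$, are not established.

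More importantly, you miss --- and at one point deny --- the structural lemma that makes the standard proof work and removes any need for a partition limit. For $s$ in the interior of an excursion interval $]g_{n},d_{n}[$ one has $k_{\gamma'_{s}}=k_{g_{n}}=k_{\gamma'_{u}}$ for every $u\in\,]g_{n},s[$, and since $k$ is progressive and $\gamma'_{u}\leq u$, this common value is $\mathcal{F}_{u}$-measurable for all such $u$, hence $\mathcal{F}_{s-}$-measurable. Consequently the predictable projection of the composed process $K_{s}:=k_{\gamma'_{s}}$ satisfies ${}^{p}K=K$ on the predictable set $\{Y\neq0\}$; this is the reading of ${}^{p}k_{\gamma'}$ that the statement requires (the projection of $k_{\gamma'}$, not the composition $({}^{p}k)_{\gamma'}$), and your remark that $\bigl(k_{\gamma'_{t}}-{}^{p}k_{\gamma'_{t}}\bigr)Y_{t}$ is ``generally nonzero on the excursions'' is true only for the other reading. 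Granting the lemma, on each stochastic interval $[T,D_{T}[$ with $Y_{T}\neq0$ and $D_{T}$ the next zero of $Y$, the process $K$ is frozen at $K_{T}$, so $d(K_{s}Y_{s})=K_{s}\,dY_{s}={}^{p}K_{s}\,dY_{s}$ there (isolated times are negligible for $dY$ because $\langle M^{Y}\rangle$ and $A^{Y}$ are continuous); hence $dR=0$ off $\{Y=0\}$, and the only genuinely technical point left is that $R$ is globally of bounded variation, which your sketch also does not address. Without this lemma your approximation scheme has no mechanism forcing the limit integrand to be the predictable projection rather than some other predictable modification, so the gap is not merely one of detail.
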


Proposition \ref{balpro} is a powerful and interesting tool. However, the fact that we know nothing about the form of the process $R$ can be limiting. The processes $Z^{\alpha}$ and $\mathcal{Z}^{\alpha}$ are critical to this study. Bouhadou and Ouknine \cite{siam} identified the process $R$ of Proposition \ref{balpro} when the progressive process $k$ is equal to $Z^{\alpha}$ or $\mathcal{Z}^{\alpha}$. We recall these results below.  

\begin{prop}[\textbf{Ouknine and Bouhadou \cite{siam}}]\label{pzalph}
Let $Y$ be a continuous semi-martingale and $Z^{\alpha}$ be the process defined in \eqref{zalpha}. Then,
$$Z^{\alpha}_{t}Y_{t}=\int_{0}^{t}{Z^{\alpha}_{s}dY_{s}}+(2\alpha-1)L_{t}^{0}(Z^{\alpha}Y),$$
where $L_{\cdot}^{0}(Z^{\alpha}Y)$ is the local time of the semi-martingale $Z^{\alpha}Y$.
\end{prop}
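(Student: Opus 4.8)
The plan is to obtain the identity as an instance of the balayage formula of Proposition~\ref{balpro} and then to identify the finite-variation remainder it produces as $(2\alpha-1)L^{0}(Z^{\alpha}Y)$. To bring $Z^{\alpha}$ into the balayage framework, note that the process of \eqref{zalpha} is bounded and progressively measurable, equal to the constant $\zeta_{n}$ on each excursion interval $]g_{n},d_{n}[$ and vanishing on $\mathcal{W}=\{t\geq0:Y_{t}=0\}$. I would work with the left-closed modification $k_{t}=\sum_{n}\zeta_{n}1_{[g_{n},d_{n}[}(t)$, for which one checks that $k_{\gamma^{'}_{t}}Y_{t}=Z^{\alpha}_{t}Y_{t}$ for every $t$: on an excursion interval $\gamma^{'}_{t}=g_{n}$ and $k_{g_{n}}=\zeta_{n}=Z^{\alpha}_{t}$, while on $\mathcal{W}$ both products vanish. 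Proposition~\ref{balpro} then gives
$$Z^{\alpha}_{t}Y_{t}=\int_{0}^{t}{{}^{p}k_{\gamma^{'}_{s}}\,dY_{s}}+R_{t},$$
where $R$ is continuous, of finite variation, and $dR_{t}$ is carried by $\mathcal{W}$.

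The second step is to recognize the integral. Since the sign $\zeta_{n}$ is already revealed once the path has entered the $n$-th excursion (for $s\in\,]g_{n},d_{n}[$ one has $s>g_{n}$), the predictable projection appearing in the balayage integrand coincides with $Z^{\alpha}_{s}$ on the excursion intervals, and the values on $\mathcal{W}$ do not contribute to the integral. Hence $\int_{0}^{t}{{}^{p}k_{\gamma^{'}_{s}}\,dY_{s}}=\int_{0}^{t}{Z^{\alpha}_{s}\,dY_{s}}$, and we arrive at the decomposition $Z^{\alpha}_{t}Y_{t}=\int_{0}^{t}{Z^{\alpha}_{s}\,dY_{s}}+R_{t}$, in which $\int_{0}^{\cdot}{Z^{\alpha}_{s}\,dY_{s}}$ is the local-martingale part and $R$ is a finite-variation process carried by the common zero set of $Y$ and $U:=Z^{\alpha}Y$.

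It then remains to show that $R_{t}=(2\alpha-1)L^{0}_{t}(U)$, which is the heart of the matter. Because $|U|=|Y|$ and $\langle U\rangle=\langle Y\rangle$, the symmetric local time $L^{0}(U)$ is well defined and, like $dR$, is concentrated on $\mathcal{W}$; comparing Tanaka's formula for $U$ with the decomposition above already yields $L^{0}(U)=L^{0}(Y)$. The remaining content is that $R$ accrues only at the excursion starts, each contributing an independent sign $\zeta_{n}$ with mean $E[\zeta_{n}]=2\alpha-1$. Since the $(\zeta_{n})$ are i.i.d. and independent of $Y$, I would invoke an excursion-theoretic compensation argument (It\^o's excursion master formula, with $L^{0}(U)$ serving as the local-time clock) to replace each $\zeta_{n}$ by its mean and thereby force $R_{t}=(2\alpha-1)L^{0}_{t}(U)$.

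I expect this last step to be the main obstacle: the balayage formula only certifies that $dR$ is carried by $\{Y=0\}$, so extracting the exact proportionality constant $2\alpha-1$ is precisely where the probabilistic input on the independent signs---rather than any generic semimartingale manipulation---is indispensable. Some care with the local-time convention (the \emph{symmetric} local time) is also needed, since it is for this choice that the constant comes out to be exactly $2\alpha-1$.
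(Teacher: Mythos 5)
First, a point of reference: the paper does not prove this proposition at all. It is stated in the subsection of recalled results and attributed to Bouhadou and Ouknine \cite{siam}, so there is no in-paper argument to compare yours against; your attempt has to stand on its own. Your first two steps are a reasonable setup and match the way the paper itself manipulates $Z^{\alpha}$ elsewhere (it repeatedly passes to the left-closed modification $k_{t}=\sum_{n}\zeta_{n}1_{[g_{n},d_{n}[}(t)$ and writes $Z^{\alpha}_{t}Y_{t}=k_{\gamma^{'}_{t}}Y_{t}$ before invoking Proposition \ref{balpro}). Two small caveats there: the balayage formula requires $k$ to be progressive for a filtration in which $Y$ is still a semimartingale, so the enlargement by the signs $(\zeta_{n})$ has to be set up; and the assertion that ``the values on $\mathcal{W}$ do not contribute to the integral'' is only true for the martingale part of $Y$ (via the occupation times formula) --- the finite-variation part of $Y$ may charge $\mathcal{W}$, so the identification of the integral holds only modulo a further finite-variation term carried by $\mathcal{W}$, which you must fold into $R$.

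The genuine gap is the third step, which you correctly identify as the heart of the matter but do not carry out: nothing in your argument actually produces the constant $2\alpha-1$. Worse, the mechanism you sketch --- that ``$R$ accrues only at the excursion starts, each contributing an independent sign $\zeta_{n}$,'' to be compensated by its mean --- cannot be literally right: Proposition \ref{balpro} delivers a \emph{continuous} finite-variation $R$ carried by $\{Y=0\}$, and a continuous process cannot collect discrete contributions at the countably many times $g_{n}$. The way the constant actually arises in \cite{siam} is through the predictable projection itself: ${}^{p}(k_{\gamma^{'}})_{s}$ equals $Z^{\alpha}_{s}$ off the zero set but equals $E[\zeta]=2\alpha-1$ \emph{on} the zero set, where the sign of the upcoming excursion is not yet $\mathcal{F}_{s-}$-measurable; the term $(2\alpha-1)L^{0}_{t}$ then comes from pairing that value with $dL^{0}_{s}$, e.g.\ by running the balayage computation against $|Y_{t}|=\int_{0}^{t}\mathrm{sgn}(Y_{s})\,dY_{s}+L^{0}_{t}(Y)$ (or against $U^{+}$ and $U^{-}$ separately via Tanaka), whose finite-variation part is exactly the local time. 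Your step 2, by working with $Y$ rather than $|Y|$ and discarding the projection's values on $\mathcal{W}$ as irrelevant to the integral, throws away precisely the term that carries the factor $2\alpha-1$, leaving you with an $R$ about which the balayage formula says nothing more than that it is carried by $\{Y=0\}$. Your closing remark about the symmetric local-time convention is well taken, but the decisive computation is missing.
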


\begin{prop}[\textbf{Ouknine and Bouhadou \cite{siam}}]
Let $Y$ be a continuous semi-martingale and $Z^{\alpha}$ be the process defined in \eqref{zalpha}. Then,
$$\mathcal{Z}^{\alpha}_{t}Y_{t}=\int_{0}^{t}{\mathcal{Z}^{\alpha}_{s}dY_{s}}+\int_{0}^{t}{(2\alpha(s)-1)dL_{s}^{0}(\mathcal{Z}^{\alpha}Y)},$$
where $L_{\cdot}^{0}(\mathcal{Z}^{\alpha}Y)$ is the local time of the semi-martingale $\mathcal{Z}^{\alpha}Y$.
\end{prop}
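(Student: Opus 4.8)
The plan is to reduce the piecewise-constant (inhomogeneous) statement to the constant-skewness identity of Proposition~\ref{pzalph} by working block by block on the partition $0=t_0<t_1<\cdots<t_m$. The key observation is that, on each time-block $[t_i,t_{i+1})$, the process $\mathcal{Z}^{\alpha}$ coincides exactly with a homogeneous process of the type \eqref{zalpha}: if I set $Z^{(i)}_s=\sum_{n}\zeta^{i}_{n}1_{]g_n,d_n[}(s)$, built from the single sign-sequence $(\zeta^{i}_{n})_n$ with parameter $\alpha_i$, then $\mathcal{Z}^{\alpha}_s=Z^{(i)}_s$ for every $s\in[t_i,t_{i+1})$. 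Since each $Z^{(i)}$ is a bona fide $Z^{\alpha_i}$-process, Proposition~\ref{pzalph} applies to it and yields, for any $a<b$,
$$Z^{(i)}_bY_b-Z^{(i)}_aY_a=\int_a^b Z^{(i)}_s\,dY_s+(2\alpha_i-1)\bigl(L^0_b(Z^{(i)}Y)-L^0_a(Z^{(i)}Y)\bigr).$$

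First I would telescope $\mathcal{Z}^{\alpha}_tY_t$ along the partition, writing it as the sum of the increments over the successive blocks $[t_i,t_{i+1})$, the last one ending at $t$. On the interior of the $i$-th block I would substitute the displayed identity for $Z^{(i)}$, using three facts: that $\mathcal{Z}^{\alpha}_s=Z^{(i)}_s$ there, so $\int \mathcal{Z}^{\alpha}\,dY=\int Z^{(i)}\,dY$ on the block; that $2\alpha_i-1=2\alpha(s)-1$ on the block; and that, since $\mathcal{Z}^{\alpha}Y$ and $Z^{(i)}Y$ agree on the open block, their occupation local times at $0$ have the same increment there, i.e. $dL^0(Z^{(i)}Y)=dL^0(\mathcal{Z}^{\alpha}Y)$ on $(t_i,t_{i+1})$. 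Assembling the blocks then turns the sum of the martingale-type pieces into the single integral $\int_0^t \mathcal{Z}^{\alpha}_s\,dY_s$ and the sum of the local-time pieces into $\int_0^t(2\alpha(s)-1)\,dL^0_s(\mathcal{Z}^{\alpha}Y)$, provided the local time charges no mass at the deterministic points $t_i$, which holds because the (symmetric) local time places no atom at a fixed time.

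The hard part will be the interfaces $t_i$. An excursion of $Y$ may straddle a partition point, and there $\mathcal{Z}^{\alpha}$ switches from the sign $\zeta^{i-1}_n$ to the independent sign $\zeta^{i}_n$ of the same ongoing excursion, so that $\mathcal{Z}^{\alpha}Y$ acquires a jump $(\zeta^{i}_n-\zeta^{i-1}_n)Y_{t_i}$ whenever $Y_{t_i}\neq0$ — in contrast to the homogeneous case, where $Z^{\alpha}Y$ is continuous because each excursion carries a single sign and $Y$ vanishes at its endpoints. These jumps are precisely the boundary terms left over by the telescoping, and they are what must be controlled to recover the clean formula. The core of the proof is therefore to show that these straddling contributions are reconciled with the stated right-hand side — equivalently, that the jumps across $0$ at the $t_i$ produce no contribution to the continuous local time and are absorbed by the telescoped boundary increments — which I would establish by a direct Tanaka--Meyer computation for the c\`adl\`ag semi-martingale $\mathcal{Z}^{\alpha}Y$ at each interface, using the continuity of $Y$ (so that $dY$ charges no $t_i$) and the predictable projection of $\mathcal{Z}^{\alpha}$ at those deterministic times. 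Once this interface analysis is settled, summing the per-block identities of Proposition~\ref{pzalph} closes the argument.
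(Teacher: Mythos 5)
First, a point of reference: the paper does not prove this proposition at all --- it is recalled verbatim from Bouhadou--Ouknine \cite{siam}, whose argument goes through the progressive balayage formula (Proposition \ref{balpro}) and an identification of the bounded-variation remainder $R$, not through a telescoping over the partition. So the only question is whether your argument closes on its own, and it does not: the interface step that you correctly identify as ``the core of the proof'' is named and then deferred to an unspecified ``direct Tanaka--Meyer computation,'' which is precisely the part that cannot be waved through.

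Concretely, the two readings of \eqref{Zalpha} each break your plan at that step. If you take \eqref{Zalpha} literally (so that $\mathcal{Z}^{\alpha}_{s}=Z^{(i)}_{s}$ on all of $[t_{i},t_{i+1})$, as your key observation asserts), then on an excursion $]g_{n},d_{n}[$ straddling $t_{i}$ the process $\mathcal{Z}^{\alpha}Y$ really does jump by $(\zeta^{i}_{n}-\zeta^{i-1}_{n})Y_{t_{i}}$, which is nonzero with positive probability; but the right-hand side of the asserted identity is continuous in $t$ (the integral $\int_{0}^{t}\mathcal{Z}^{\alpha}_{s}\,dY_{s}$ because $Y$ is continuous, the local-time term because $L^{0}$ is a continuous increasing process even for a c\`adl\`ag semimartingale), so no interface analysis can ``reconcile'' a jumping left-hand side with a continuous right-hand side --- the telescoping leaves the residual $\sum_{t_{i}\leq t}(\zeta^{i}_{n}-\zeta^{i-1}_{n})Y_{t_{i}}$, which does not vanish. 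The statement is only true under the definition the paper actually uses later (where $\mathcal{Y}^{\alpha}_{t}=k_{\gamma_{t}}X_{t}$), namely one sign per \emph{whole} excursion, chosen according to the block containing the left endpoint $g_{n}$; under that reading $\mathcal{Z}^{\alpha}Y$ is continuous, but then your premise that $\mathcal{Z}^{\alpha}=Z^{(i)}$ on $[t_{i},t_{i+1})$ fails on the portion of the straddling excursion, so the per-block substitution of Proposition \ref{pzalph} is not justified. A repairable version of your strategy would cut each block not at $t_{i}$ but at the random time $d_{n}$ ending the straddling excursion, note that on $[t_{i},d_{n}]$ the process $Y$ has no interior zeros (hence no local time accrues and $d(\mathcal{Z}^{\alpha}Y)=\mathcal{Z}^{\alpha}\,dY$ there), and apply the constant-$\alpha$ identity only on $[d_{n},t_{i+1})$ --- but that analysis is the actual content of the proof and is absent from your proposal.
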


We conclude this subsection by recalling an important theorem of \cite{eomt}, i.e., a result that enables us to characterize stochastic processes of the class $(\Sigma)$. 
 \begin{theorem}\label{abs}
 Let $X$ be a continuous process that vanishes at zero. Then,
 $$X\in(\Sigma) \Leftrightarrow |X|\in(\Sigma).$$
 \end{theorem}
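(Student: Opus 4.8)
The plan is to prove the two implications separately, using Tanaka's formula for the easy direction and the balayage formula of Proposition \ref{balpro} for the hard one. Throughout I set $\gamma_t:=\sup\{s\le t:X_s=0\}$, noting that $\{X=0\}=\{|X|=0\}$ and that the two processes share the same zero set and the same $\gamma$.

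For $(\Rightarrow)$, suppose $X=M+A\in(\Sigma)$ is continuous with $X_0=0$. I would apply Tanaka's formula to obtain
$$|X_t|=\int_0^t \mathrm{sgn}(X_s)\,dM_s+\int_0^t \mathrm{sgn}(X_s)\,dA_s+L_t^0(X),$$
with the symmetric convention $\mathrm{sgn}(0)=0$. Since condition (3) of Definition \ref{d1} forces $dA$ to be carried by $\{X=0\}$, the middle term reduces to $\int_0^t \mathbf 1_{\{X_s=0\}}\mathrm{sgn}(X_s)\,dA_s=0$. Writing $N_t:=\int_0^t \mathrm{sgn}(X_s)\,dM_s$, a continuous local martingale null at $0$, this leaves $|X|=N+L^0(X)$; as $L^0(X)$ is continuous, nondecreasing, null at $0$, and $dL^0(X)$ is carried by $\{X=0\}=\{|X|=0\}$, the pair $(N,L^0(X))$ exhibits $|X|$ as an element of the class $(\Sigma)$. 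This direction is essentially routine.

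For $(\Leftarrow)$, the difficulty is that a priori we only know $|X|$ is a semimartingale, and in general this does not force $X$ to be one. My plan is to recover $X$ from $|X|$ by restoring the sign on each excursion and to let the balayage formula do the work. Writing $\R_+\setminus\{X=0\}=\bigcup_n\,]g_n,d_n[$ as in Section \ref{s3} and letting $s_n\in\{-1,+1\}$ be the (constant) sign of $X$ on $]g_n,d_n[$, I would choose a bounded progressive process $k$ with $k_{g_n}=s_n$ for every $n$, so that $k_{\gamma_t}|X_t|=X_t$ for all $t$ (both sides vanish on $\{X=0\}$). Applying Proposition \ref{balpro} with $Y=|X|$ then yields
$$X_t=k_{\gamma_t}|X_t|=\int_0^t {}^{p}k_{\gamma_s}\,d|X|_s+R_t,$$
where $dR$ is carried by $\{X=0\}$. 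Decomposing $|X|=N+B$ with $N$ a local martingale and $dB$ carried by $\{X=0\}$ (which holds because $|X|\in(\Sigma)$), the integral splits as $\int_0^t {}^{p}k_{\gamma_s}\,dN_s+\int_0^t {}^{p}k_{\gamma_s}\,dB_s$, the first term being a local martingale and the second, like $R$, being of finite variation and carried by $\{X=0\}$. Hence $X=M+A$ with $M:=\int_0^{\cdot} {}^{p}k_{\gamma_s}\,dN_s$ and $A:=\int_0^{\cdot} {}^{p}k_{\gamma_s}\,dB_s+R$, and $dA$ is carried by $\{X=0\}$; this simultaneously shows that $X$ is a semimartingale and that $X\in(\Sigma)$.

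I expect the main obstacle to be the construction of the progressive process $k$ with $k_{g_n}=s_n$: the left ends $g_n$ of the excursions are not stopping times, and the excursion sign is only revealed immediately after $g_n$, so some care with progressive (or optional) measurability is required. This is precisely the point addressed by the processes $Z^{\alpha}$ and $\mathcal Z^{\alpha}$ and by Proposition \ref{pzalph}, which identify the residual term for such sign processes, and I would model the argument on that construction. As a consistency check, once $X$ is known to be a semimartingale one can conclude $X\in(\Sigma)$ more directly: on the open set $\{X>0\}$ one has $|X|=X$, so the finite variation parts of $X$ and $|X|$ agree there, and since the latter is carried by $\{X=0\}$ this gives $\int_0^t \mathbf 1_{\{X_s>0\}}\,dA_s=0$; the symmetric argument on $\{X<0\}$ yields $\int_0^t \mathbf 1_{\{X_s<0\}}\,dA_s=0$, whence $dA$ is carried by $\{X=0\}$.
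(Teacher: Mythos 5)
First, a point of reference: the paper does not prove Theorem \ref{abs} at all --- it is recalled from \cite{eomt} --- so there is no in-text argument to compare yours against line by line. Judged on its own merits, your proof is essentially correct. The $(\Rightarrow)$ direction is the standard Tanaka computation and is fine; the symmetric convention $\mathrm{sgn}(0)=0$ is indeed what annihilates $\int_0^t\mathrm{sgn}(X_s)\,dA_s$, and $|X|=N+L^0(X)$ is then a legitimate decomposition in the sense of Definition \ref{d1}. For $(\Leftarrow)$ your balayage route is sound in outline and has the virtue of covering the case where $X$ is only assumed to be a continuous adapted process, not a priori a semimartingale. The one loose end is the one you flag yourself: you never actually exhibit the bounded progressive $k$ with $k_{g_n}=s_n$. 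This is fixable, and more cheaply than by imitating the $Z^{\alpha}$ construction: under the usual conditions set $k_s:=\limsup_{u\downarrow s,\;u\in\Qv}\mathrm{sgn}(X_u)$. Since the sign of $X$ is constant on the excursion immediately to the right of $g_n$, one has $k_{g_n}=s_n$; and $k_s$ is $\mathcal{F}_{s+}=\mathcal{F}_s$-measurable and progressively measurable. Unlike $Z^{\alpha}$, no enlargement of filtration is needed because the signs are read off $X$ itself. With $k$ so defined, $X=\int_0^{\cdot}{}^{p}k_{\gamma_s}\,dN_s+\left(\int_0^{\cdot}{}^{p}k_{\gamma_s}\,dB_s+R\right)$ is a valid $(\Sigma)$-decomposition: the first term is a local martingale null at $0$, and the second is continuous, hence predictable, with differential carried by $\{X=0\}$.

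One caution on your closing ``consistency check'': as written, the claim that ``the finite variation parts of $X$ and $|X|$ agree on $\{X>0\}$'' compares two measures on a random open set without justification. The correct way to say it is via Tanaka plus uniqueness of the decomposition of a continuous semimartingale: the finite variation part of $|X|$ is $\int_0^{\cdot}\mathrm{sgn}(X_s)\,dA_s+L^0(X)$, which must coincide with $B$; since $dB$ and $dL^0(X)$ are both carried by $\{X=0\}$, one gets $1_{\{X_s\neq0\}}\mathrm{sgn}(X_s)\,dA_s=0$ and hence $1_{\{X_s\neq0\}}\,dA_s=0$. Made rigorous this way, that two-line argument is the cleanest proof of $(\Leftarrow)$ whenever $X$ is already known to be a semimartingale, and it renders the whole balayage apparatus unnecessary in that case.
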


{\subsection{New characterization results for continuous semi-martingales of the class \texorpdfstring{$(\Sigma)$}{sigma}}}

Now, we shall state new characterization results for all continuous processes of the class $(\Sigma)$. We begin by extending Theorem 2.1 of \cite{nik}, which characterizes only non-negative submartingales of the class $(\Sigma)$.

\begin{prop}
Let $X$ be a continuous semi-martingale. The following are equivalent:
\begin{enumerate}
	\item $X\in(\Sigma)$;
	\item For every locally bounded Borel function $f$, the process 
	$$\left(f(L_{t})|X_{t}|-F(L_{t});t\geq0\right)$$
	is a local martingale, where $L$ is the local time of $X$ at level zero and $F(x)=\int_{0}^{x}{f(z)dz}$.
\end{enumerate}
\end{prop}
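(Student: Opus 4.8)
The plan is to prove both implications by passing to the absolute value $|X|$ and invoking Theorem \ref{abs}, so that the entire statement reduces to understanding the canonical decomposition of $|X|$. The engine throughout is Tanaka's formula: writing $X=M+A$ for the continuous-semimartingale decomposition, one has $|X_t|=|X_0|+\int_0^t \mathrm{sgn}(X_s)\,dM_s+\int_0^t \mathrm{sgn}(X_s)\,dA_s+L_t$, where $L=L^0(X)$ is the symmetric local time and $\mathrm{sgn}(0)=0$. The observation I want to exploit in both directions is that the finite-variation part of $|X|$ equals $L$ exactly when $\int_0^\cdot \mathrm{sgn}(X_s)\,dA_s\equiv 0$; since every process of the class $(\Sigma)$ vanishes at the origin, I normalize $X_0=0$ throughout.

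For $(1)\Rightarrow(2)$ I would argue as follows. Assuming $X\in(\Sigma)$, the measure $dA$ is carried by $\{X=0\}$, where $\mathrm{sgn}$ vanishes, so $\int_0^t \mathrm{sgn}(X_s)\,dA_s=0$ and Tanaka collapses to $|X_t|=\int_0^t \mathrm{sgn}(X_s)\,dM_s+L_t$. Hence $|X|$ is itself of the class $(\Sigma)$, with local-martingale part $\int_0^\cdot \mathrm{sgn}(X_s)\,dM_s$ and finite-variation part $L$, whose differential is carried by $\{|X|=0\}$ by the defining property of local time. I then apply Corollary \ref{c4} to $|X|$: for every bounded Borel $f$ the process $f(L_t)|X_t|-\int_0^t f(L_s)\,dL_s$ is a local martingale, and the change of variables $\int_0^t f(L_s)\,dL_s=\int_0^{L_t}f(u)\,du=F(L_t)$ (valid because $L$ is continuous and nondecreasing) produces exactly the asserted process. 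To upgrade from bounded to locally bounded $f$, I localize along $\tau_n=\inf\{t:L_t\ge n\}$: on $[0,\tau_n]$ I may replace $f$ by the bounded function $f\mathbf 1_{[0,n]}$, so each stopped process is a local martingale, and $\tau_n\uparrow\infty$ delivers the claim.

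For $(2)\Rightarrow(1)$, by Theorem \ref{abs} it suffices to show $|X|\in(\Sigma)$, and here I expect a single test function to carry the weight. Applying the hypothesis with $f\equiv 1$ (so $F(x)=x$) shows that $|X_t|-L_t$ is a local martingale; comparing with Tanaka shows that $|X_0|+\int_0^t \mathrm{sgn}(X_s)\,dM_s+\int_0^t \mathrm{sgn}(X_s)\,dA_s$ is a local martingale, hence so is $\int_0^\cdot \mathrm{sgn}(X_s)\,dA_s$. Being a continuous finite-variation local martingale null at $0$, it vanishes identically. Therefore the finite-variation part of $|X|$ reduces to $L$, whose differential is carried by $\{|X|=0\}$, so $|X|\in(\Sigma)$ and Theorem \ref{abs} returns $X\in(\Sigma)$.

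I expect the delicate points to be bookkeeping rather than conceptual. The crux is correctly identifying the finite-variation part of $|X|$ as $L$ rather than $\int_0^\cdot \mathrm{sgn}(X_s)\,dA_s+L$, which rests on the carrying property together with the symmetric convention $\mathrm{sgn}(0)=0$; getting this wrong would break both directions. The second subtlety is the passage from bounded to locally bounded $f$ in $(1)\Rightarrow(2)$, where I must verify that truncating to $f\mathbf 1_{[0,n]}$ leaves both $f(L_t)|X_t|$ and the integral term unchanged on $[0,\tau_n]$ before sending $n\to\infty$. Finally, the identification of the $F$-term of Corollary \ref{c4} with the proposition's $F(L_t)$ via $\int_0^t f(L_s)\,dL_s=F(L_t)$ is precisely what makes the two formulations coincide.
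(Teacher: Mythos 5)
Your proof is correct, and its backbone coincides with the paper's: both arguments reduce the statement to the non-negative process $|X|$ through Theorem \ref{abs}. Where you part ways is in how the equivalence for $|X|$ is established. The paper disposes of it in one line by citing Nikeghbali's martingale characterization of non-negative submartingales of the class $(\Sigma)$ (Theorem 2.1 of \cite{nik}), whereas you re-derive exactly the instance needed: Tanaka's formula identifies the finite-variation part of $|X|$ as $L$ (the term $\int_0^\cdot \mathrm{sgn}(X_s)\,dA_s$ vanishing because $dA$ is carried by $\{X=0\}$), Corollary \ref{c4} applied to $|X|$ then yields the direct implication for bounded $f$, the change of variables $\int_0^t f(L_s)\,dL_s=F(L_t)$ together with the localization along $\tau_n=\inf\{t:L_t\ge n\}$ upgrades this to locally bounded $f$, and the single choice $f\equiv 1$ gives the converse by forcing the continuous finite-variation local martingale $\int_0^\cdot \mathrm{sgn}(X_s)\,dA_s$ to vanish identically, whence the finite-variation part of $|X|$ is $L$ and $|X|\in(\Sigma)$. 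What your version buys is a self-contained argument that never leaves the paper's own toolbox, plus an explicit acknowledgment of the normalization $X_0=0$, which the paper leaves implicit even though Theorem \ref{abs} is stated only for processes vanishing at the origin and the implication from (2) to (1) genuinely requires it (a local martingale started at $1$ satisfies condition (2) but is not of the class $(\Sigma)$ under Definition \ref{d1}). The price is length; the paper's citation is shorter but rests on an external result.
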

\begin{proof}
According to Theorem \ref{abs}, we have $X\in(\Sigma)$ if, and only if $|X|$ is a non-negative submartingale of the class $(\Sigma)$. However, from the martingale characterization of Nikeghbali \cite{nik}, this is equivalent to the fact that the process
$$\left(f(L_{t})|X_{t}|-F(L_{t});t\geq0\right)$$
	is a local martingale. This completes the proof.
\end{proof}

The following proposition extends another result characterizing the positive submartingales of the class $(\Sigma)$ (see Proposition 2.4 of \cite{mult}).

\begin{prop}
Let $X$ be a continuous semi-martingale. The following are equivalent:
\begin{enumerate}
	\item $X\in(\Sigma)$;
	\item There exists a unique strictly positive, continuous local martingale $M$, with $M_{0}=1$, such that
	$$|X_{t}|=\frac{M_{t}}{I_{t}}-1,$$
	where $$I_{t}=\inf_{s\leq t}{M_{s}}.$$
\end{enumerate}
The local martingale $M$ is given by
$$M_{t}=(1+|X_{t}|)\exp{(-L_{t})}.$$
\end{prop}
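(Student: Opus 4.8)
The plan is to mirror the proof of the preceding proposition and reduce everything to the multiplicative characterization of \emph{positive} submartingales of the class $(\Sigma)$, namely Proposition 2.4 of \cite{mult}, using Theorem \ref{abs} as the bridge. First I would observe that membership in the class $(\Sigma)$ forces $X_{0}=M_{0}+A_{0}=0$, so $|X|$ is a continuous non-negative process vanishing at the origin. Tanaka's formula gives $|X_{t}|=\int_{0}^{t}\mathrm{sgn}(X_{s})\,dX_{s}+L_{t}$, which exhibits $L$, the local time of $X$ at level zero, as the finite-variation part of $|X|$; since $L$ is non-decreasing and $dL$ is carried by $\{|X|=0\}$, the process $|X|$ is a continuous non-negative submartingale, and by Theorem \ref{abs} one has $X\in(\Sigma)$ if and only if $|X|\in(\Sigma)$.

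For $(1)\Rightarrow(2)$ I would then apply Proposition 2.4 of \cite{mult} to the positive submartingale $|X|$ of the class $(\Sigma)$. This provides a unique strictly positive continuous local martingale $M$, $M_{0}=1$, with $1+|X_{t}|=M_{t}/I_{t}$ and $I_{t}=\inf_{s\le t}M_{s}$, that is $|X_{t}|=M_{t}/I_{t}-1$, together with the explicit formula $M_{t}=(1+|X_{t}|)\exp(-A_{t})$, where $A$ is the finite-variation part of $|X|$; substituting $A=L$ yields $M_{t}=(1+|X_{t}|)\exp(-L_{t})$. For $(2)\Rightarrow(1)$, conversely, the representation $|X_{t}|=M_{t}/I_{t}-1$ is exactly that of a continuous positive submartingale of the class $(\Sigma)$ by the same Proposition 2.4 of \cite{mult}, so $|X|\in(\Sigma)$ and hence $X\in(\Sigma)$ once more by Theorem \ref{abs}; uniqueness of $M$ is inherited directly from Nikeghbali's decomposition.

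An alternative that stays entirely within the present paper is to invoke Corollary \ref{c5} on the non-negative process $|X|\in(\Sigma)$: it already yields the positive local martingale $W_{t}=(1+|X_{t}|)\exp(-A_{t})$ with $W_{0}=1$ and $|X_{t}|=C_{t}W_{t}-1$, $C_{t}=e^{A_{t}}$. The remaining task is to identify $C_{t}=1/I_{t}$, i.e. $\exp(-A_{t})=\inf_{s\le t}W_{s}$; this is where the only real work lies. It follows from an Az\'ema--Yor-type argument: $W_{s}\ge e^{-A_{s}}\ge e^{-A_{t}}$ for $s\le t$ gives $\inf_{s\le t}W_{s}\ge e^{-A_{t}}$, while at the last zero $g_{t}=\sup\{s\le t:|X_{s}|=0\}$ one has $A_{t}=A_{g_{t}}$ (since $A=L$ increases only on $\{|X|=0\}$) and $W_{g_{t}}=e^{-A_{t}}$, giving the reverse inequality. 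Thus the main obstacle is purely this bookkeeping—confirming that the exponent in $W$ is the local time $L$ and that the exponential $e^{-A_{t}}$ coincides with the running infimum $I_{t}$ required by the statement; the equivalence itself is then immediate.
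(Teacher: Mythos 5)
Your first route—reduce to $|X|$ via Theorem \ref{abs} and Tanaka's formula, then invoke Proposition 2.4 of \cite{mult} for both directions and for uniqueness—is correct and is exactly the argument the paper intends: the proposition is stated without proof, but it is the verbatim analogue of the proof given for the preceding proposition (martingale characterization via Theorem 2.1 of \cite{nik}). Your self-contained alternative via Corollary \ref{c5}, including the Az\'ema--Yor identification $e^{-A_{t}}=\inf_{s\leq t}W_{s}$ through the last zero $g_{t}$, is also sound and arguably adds value precisely where the paper is silent.
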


Next, we present a new method to characterize stochastic processes of the class $(\Sigma)$ using the progressive process $Z^{\alpha}$ defined in \eqref{zalpha}.
\begin{theorem}\label{z}
Let $X$ be a continuous semi-martingale. The following are equivalent:
\begin{enumerate}
	\item $X\in(\Sigma)$.
	\item $\forall\alpha\in[0,1]$, $Z^{\alpha}X\in(\Sigma)$.
	\item $\exists\alpha\in[0,1]$ such that $Z^{\alpha}X\in(\Sigma)$.
\end{enumerate}
\end{theorem}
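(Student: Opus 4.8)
The plan is to collapse all three conditions onto the single statement $|X|\in(\Sigma)$ by means of the absolute-value criterion of Theorem \ref{abs}, the point being that the excursion-sign factor $Z^{\alpha}$ leaves the modulus of $X$ unchanged. Throughout, $X$ is taken to vanish at zero, in accordance with the hypothesis of Theorem \ref{abs} and the standing convention that processes of the class $(\Sigma)$ are null at zero.

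First I would establish two structural facts about $Z^{\alpha}X$. The key identity is
$$|Z^{\alpha}_{t}X_{t}|=|X_{t}|\quad\text{for all }t\geq0\text{ and every }\alpha\in[0,1],$$
which follows directly from the definition \eqref{zalpha}: on each excursion interval $]g_{n},d_{n}[$ the factor $Z^{\alpha}$ equals $\pm1$, so $Z^{\alpha}X=\pm X$ there, while on the zero set $\{X=0\}$ both sides vanish. Next, $Z^{\alpha}X$ must be shown to be a continuous semimartingale vanishing at zero, so that Theorem \ref{abs} applies to it. Continuity is the delicate point: although $Z^{\alpha}$ itself jumps, it does so only at the excursion endpoints $g_{n},d_{n}$, where $X=0$, and the bound $|Z^{\alpha}X|=|X|$ together with the continuity of $X$ forces $Z^{\alpha}X$ to have no jumps. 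The semimartingale property, with an explicit decomposition, is precisely Proposition \ref{pzalph} applied to $Y=X$, namely $Z^{\alpha}_{t}X_{t}=\int_{0}^{t}{Z^{\alpha}_{s}dX_{s}}+(2\alpha-1)L^{0}_{t}(Z^{\alpha}X)$, a stochastic integral plus an increasing term; finally $Z^{\alpha}_{0}=0$ gives $(Z^{\alpha}X)_{0}=0$.

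With these in hand the equivalences follow formally. Applying Theorem \ref{abs} to the continuous, null-at-zero process $Z^{\alpha}X$ yields
$$Z^{\alpha}X\in(\Sigma)\iff|Z^{\alpha}X|\in(\Sigma)\iff|X|\in(\Sigma),$$
where the second equivalence is the modulus identity, and this chain is valid for each fixed $\alpha$. In particular, membership of $Z^{\alpha}X$ in $(\Sigma)$ does not depend on $\alpha$, so conditions $(2)$ and $(3)$ each reduce to $|X|\in(\Sigma)$; and Theorem \ref{abs} applied to $X$ gives $(1)\iff|X|\in(\Sigma)$ as well. Reading off the cycle $(1)\Rightarrow(2)\Rightarrow(3)\Rightarrow(1)$ then completes the argument, with the implication $(2)\Rightarrow(3)$ being trivial.

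I expect the main obstacle to be the second step, namely verifying rigorously that $Z^{\alpha}X$ is a genuine continuous semimartingale despite the discontinuity of $Z^{\alpha}$; this is exactly what Proposition \ref{pzalph} supplies. Once that is granted, the remaining content is the bookkeeping identity $|Z^{\alpha}X|=|X|$ and two applications of Theorem \ref{abs}.
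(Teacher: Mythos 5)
Your argument is correct, and it diverges from the paper's in a way worth noting. For $(3)\Rightarrow(1)$ the two proofs coincide: both pass through Theorem \ref{abs} and the identity $|Z^{\alpha}X|=|X|$. The difference is in $(1)\Rightarrow(2)$. The paper proves this implication by a direct computation: it applies Proposition \ref{pzalph} to $X=M+V$, kills the term $\int_{0}^{t}Z^{\alpha}_{s}\,dV_{s}$ using the fact that $dV$ is carried by $\{X=0\}=\{Z^{\alpha}=0\}$, and arrives at the explicit decomposition $Z^{\alpha}_{t}X_{t}=\int_{0}^{t}Z^{\alpha}_{s}\,dM_{s}+(2\alpha-1)L^{0}_{t}(Z^{\alpha}X)$ (equation \eqref{dem}), from which membership in $(\Sigma)$ is read off directly. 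You instead collapse all three conditions onto the single invariant $|X|\in(\Sigma)$ and apply Theorem \ref{abs} twice, once to $X$ and once to $Z^{\alpha}X$; this is legitimate provided $Z^{\alpha}X$ is continuous and null at zero, which you verify (and which the paper takes for granted), and it makes the $\alpha$-independence of conditions $(2)$ and $(3)$ completely transparent. What your route does not deliver is the explicit Doob--Meyer decomposition of $Z^{\alpha}X$: in the paper that decomposition is the real payoff of the implication $(1)\Rightarrow(2)$ --- it is recorded as Remark \ref{o} and is the engine behind Corollary \ref{cmart} and the skew Brownian motion constructions that follow --- so if you adopt your proof you would still need the computation of \eqref{dem} separately for the later sections. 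A minor additional remark: since the second direction of Theorem \ref{abs} already asserts that a continuous null-at-zero process with $|X|\in(\Sigma)$ is itself in $(\Sigma)$ (hence a semimartingale), your appeal to Proposition \ref{pzalph} for the semimartingale property of $Z^{\alpha}X$ is not strictly needed, though it does no harm.
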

\begin{proof}
$1\Rightarrow2)$ Let $X=M+V$ be an element of the class $(\Sigma)$. From Proposition 2.2 of \cite{siam}, we have
$$Z^{\alpha}_{t}X_{t}=\int_{0}^{t}{Z^{\alpha}_{s}dX_{s}}+(2\alpha-1)L_{t}^{0}(Z^{\alpha}X)$$
$$\hspace{3cm}=\int_{0}^{t}{Z^{\alpha}_{s}dM_{s}}+\int_{0}^{t}{Z^{\alpha}_{s}dV_{s}}+(2\alpha-1)L_{t}^{0}(Z^{\alpha}X).$$
However, we know that $\int_{0}^{t}{Z^{\alpha}_{s}dV_{s}}=0$ since $dV_{t}$ is carried by $\{t\geq0; X_{t}=0\}$ and $X_{t}=0$ $\Leftrightarrow$ $Z^{\alpha}_{t}=0$. Hence,
\begin{equation}\label{dem}
Z^{\alpha}_{t}X_{t}=\int_{0}^{t}{Z^{\alpha}_{s}dM_{s}}+(2\alpha-1)L_{t}^{0}(Z^{\alpha}X).	
\end{equation}
Then, $Z^{\alpha}X\in(\Sigma)$ since $(2\alpha-1)dL_{t}^{0}(Z^{\alpha}X)$ is carried by $\{t\geq0; Z^{\alpha}_{t}X_{t}=0\}$ and $\left(\int_{0}^{t}{Z^{\alpha}_{s}dM_{s}};t\geq0\right)$ is a local martingale.\\
$2\Rightarrow3)$ If we assume that $\forall\alpha\in[0,1]$, $Z^{\alpha}X\in(\Sigma)$. In particular, it follows that $\exists\alpha\in[0,1]$ such that $Z^{\alpha}X\in(\Sigma)$.\\
$3\Rightarrow1)$ Now, assume that $\exists\alpha\in[0,1]$ such that $Z^{\alpha}X\in(\Sigma)$. Then, according to Theorem \ref{abs}, $|Z^{\alpha}X|\in(\Sigma)$. However, $\forall t\geq0$, $Z^{\alpha}_{t}\in\{-1,0,1\}$ and $Z^{\alpha}_{t}=0\Leftrightarrow X_{t}=0$. Therefore,  
$$|Z^{\alpha}X|=|X|.$$
Consequently, 
$$|X|\in(\Sigma).$$
This completes the proof.
\end{proof}

\begin{rem}\label{o}
In the above-mentioned theorem, we have proved that when $X\in(\Sigma)$, we have $\forall\alpha\in[0,1]$,
$$Z^{\alpha}_{t}X_{t}=\int_{0}^{t}{Z^{\alpha}_{s}dM_{s}}+(2\alpha-1)L_{t}^{0}(Z^{\alpha}X).$$
\end{rem}

Now, as an application of Theorem \ref{z}, we have the following corollary, which gives a new martingale characterization of the class $(\Sigma)$.
\begin{coro}\label{cmart}
Let $X$ be a continuous semi-martingale. The following holds:
$$X\in(\Sigma)\Leftrightarrow \exists\alpha\in[0,1] \text{ such that }Z^{\alpha}X \text{ is a local martingale}.$$
\end{coro}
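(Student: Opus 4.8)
The plan is to read this off directly from Theorem~\ref{z} together with the explicit decomposition recorded in Remark~\ref{o}; the decisive observation is that the finite-variation term $(2\alpha-1)L_t^0(Z^\alpha X)$ appearing there vanishes precisely when $\alpha=1/2$. So rather than proving anything new about the class $(\Sigma)$, the corollary is obtained by specializing the already-established identity and invoking one direction of Theorem~\ref{z}.

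For the forward implication I would assume $X=M+V\in(\Sigma)$ and specialize Remark~\ref{o} to $\alpha=1/2$, which gives
$$Z^{1/2}_t X_t=\int_{0}^{t}{Z^{1/2}_s\,dM_s}+\left(2\cdot\tfrac{1}{2}-1\right)L_t^0(Z^{1/2}X)=\int_{0}^{t}{Z^{1/2}_s\,dM_s}.$$
Since $Z^{1/2}$ is a bounded progressive process (it takes values in $\{-1,0,1\}$) and $M$ is a continuous local martingale, the stochastic integral $\int_{0}^{\cdot}{Z^{1/2}_s\,dM_s}$ is a well-defined local martingale. Hence $Z^{1/2}X$ is a local martingale, exhibiting the required $\alpha\in[0,1]$.

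For the converse I would use that any local martingale trivially belongs to the class $(\Sigma)$: writing $Z^\alpha X=N+0$ with local-martingale part $N:=Z^\alpha X$ and zero finite-variation part, the carrying condition $\int_{0}^{t}{1_{\{\cdot\neq0\}}dA_s}=0$ holds vacuously. Thus, if there exists $\alpha\in[0,1]$ with $Z^\alpha X$ a local martingale, then $Z^\alpha X\in(\Sigma)$, and the implication $(3)\Rightarrow(1)$ of Theorem~\ref{z} yields $X\in(\Sigma)$.

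Because the substantive work is already carried by Theorem~\ref{z} and Remark~\ref{o}, I do not anticipate a genuine obstacle; the statement is essentially a one-line consequence. The only two points deserving a word of care are (i) verifying that $\int_{0}^{\cdot}{Z^{1/2}_s\,dM_s}$ is genuinely a local martingale, which follows from boundedness of $Z^{1/2}$ together with continuity of $M$ since then $\int_{0}^{t}{(Z^{1/2}_s)^2\,d[M]_s}\leq[M]_t<\infty$; and (ii) confirming the normalization $(Z^\alpha X)_0=0$, which holds because $X$ vanishes at zero so that $0$ lies in the zero set of $X$ and hence $Z^\alpha_0=0$.
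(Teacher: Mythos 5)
Your proposal is correct and follows essentially the same route as the paper: the forward direction specializes the identity of Remark~\ref{o} to $\alpha=\tfrac{1}{2}$ so that the local-time term vanishes, and the converse observes that a local martingale is trivially of class $(\Sigma)$ and then invokes the implication $(3)\Rightarrow(1)$ of Theorem~\ref{z}. Your added remarks on the integrability of $\int_0^\cdot Z^{1/2}_s\,dM_s$ and the normalization at $0$ are sensible but do not change the argument.
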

\begin{proof}
$\Rightarrow)$ It follows from Remark \ref{o} that $\forall \alpha\in[0,1]$,
$$Z^{\alpha}_{t}X_{t}=\int_{0}^{t}{Z^{\alpha}_{s}dM_{s}}+(2\alpha-1)L_{t}^{0}(Z^{\alpha}X).$$
Hence, in particular, for $\alpha=\frac{1}{2}$, we obtain
$$Z^{\alpha}_{t}X_{t}=\int_{0}^{t}{Z^{\alpha}_{s}dM_{s}}.$$
 Therefore, $Z^{\alpha}X$ is a local martingale.

$\Leftarrow)$ Now, if we assume that $\exists\alpha\in[0,1]$ such that $Z^{\alpha}X$ is a local martingale, it follows that $Z^{\alpha}X\in(\Sigma)$. Then, it follows from Theorem \ref{z} that $X\in(\Sigma)$.
\end{proof}

It is well known that the absolute value $|M|$ of a continuous local martingale $M$ is an element of the class $(\Sigma)$. In the next corollary, we show that for any stochastic process $X$ of the class $(\Sigma)$, there exists a local martingale $M$ that has the same absolute value as $X$.

\begin{coro}\label{pabs}
Let $X$ be a continuous semi-martingale. Then, $X$ is an element of the class $(\Sigma)$ if and only if there exists a local martingale $M$ such that
$$|X|=|M|.$$
\end{coro}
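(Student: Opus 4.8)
The plan is to read off both implications from the two characterization results already in hand, namely Corollary \ref{cmart} (the martingale characterization via $Z^\alpha$) and Theorem \ref{abs} (the equivalence $X\in(\Sigma)\Leftrightarrow|X|\in(\Sigma)$), together with the elementary fact, recorded in the proof of Theorem \ref{z}, that $Z^\alpha$ takes values in $\{-1,0,1\}$ and vanishes exactly where $X$ does.

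For the direct implication, I would suppose $X\in(\Sigma)$ and invoke Corollary \ref{cmart}: there is some $\alpha\in[0,1]$ (concretely $\alpha=\tfrac12$) for which $Z^\alpha X$ is a local martingale. I would then simply set $M:=Z^\alpha X$ and check $|M|=|X|$. Since $Z^\alpha_t\in\{-1,0,1\}$ for every $t$ and $Z^\alpha_t=0$ if and only if $X_t=0$, one has $|Z^\alpha_t|=1$ whenever $X_t\neq0$ and $|Z^\alpha_t X_t|=0$ whenever $X_t=0$; in both cases $|Z^\alpha_t X_t|=|X_t|$, so $|M|=|X|$ and $M$ is the desired local martingale.

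For the converse, I would suppose $|X|=|M|$ for some continuous local martingale $M$. A continuous local martingale is trivially an element of the class $(\Sigma)$ (take the decomposition $M=M+0$, so that the finite-variation part is $0$), hence $M\in(\Sigma)$; Theorem \ref{abs} then gives $|M|\in(\Sigma)$, whence $|X|=|M|\in(\Sigma)$. A second application of Theorem \ref{abs}, this time to $X$ itself, yields $X\in(\Sigma)$. No computation beyond these two invocations is needed.

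The single point demanding care, and the main obstacle, is the hypothesis of Theorem \ref{abs}, which is stated for continuous processes that vanish at zero; note that every element of the class $(\Sigma)$ starts at $0$ by Definition \ref{d1}. In the direct implication this is automatic, since $M_0=Z^\alpha_0 X_0=0$. In the converse one must read $M$ as normalized so that $M_0=0$, consistent with the paper's conventions; then $|X_0|=|M_0|=0$ forces $X_0=0$, and both applications of Theorem \ref{abs} are legitimate. Once this normalization is in place the argument closes cleanly.
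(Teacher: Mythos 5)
Your forward implication is exactly the paper's: invoke Corollary \ref{cmart} with $\alpha=\tfrac12$, set $M=Z^{\alpha}X$, and observe $|Z^{\alpha}X|=|X|$ because $Z^{\alpha}_t\in\{-1,0,1\}$ vanishes precisely where $X$ does. Your converse, however, takes a genuinely different and somewhat slicker route. The paper applies Tanaka's formula to $|M_t|$ to exhibit explicitly the decomposition $|X_t|=\int_0^t \mathrm{sign}(M_s)\,dM_s+L_t^0(M)$, notes that $L^0(M)=L^0(X)$ is carried by the zero set of $X$, concludes $|X|\in(\Sigma)$, and then applies Theorem \ref{abs} once. You instead observe that a continuous local martingale vanishing at $0$ is trivially of the class $(\Sigma)$ (decomposition $M=M+0$), so Theorem \ref{abs} applied to $M$ gives $|M|=|X|\in(\Sigma)$, and a second application of Theorem \ref{abs}, to $X$, closes the argument. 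Your version buys economy: no stochastic calculus beyond what is already packaged in Theorem \ref{abs}, and no need to identify the local times of $M$ and $X$. The paper's version buys explicitness: it produces the actual Doob--Meyer decomposition of $|X|$, which is occasionally useful downstream. Both arguments share the same implicit hypotheses --- that the $M$ in the converse is continuous (needed for Tanaka in the paper, for Theorem \ref{abs} in yours) and normalized with $M_0=0$ --- and you are right to flag the normalization explicitly; the paper leaves it tacit. Your proof is correct.
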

\begin{proof}
$\Rightarrow)$ Assume that $X$ is an element of the class $(\Sigma)$ and define $Z^{\alpha}$ with $\alpha=\frac{1}{2}$. Hence, it follows from Corollary \ref{cmart} that $M=Z^{\alpha}X$ is a continuous local martingale. Then, $|X|=|M|$ since $|Z^{\alpha}X|=|X|$.

$\Leftarrow)$ Now, assume that there exists a continuous martingale $M$ such that $|X|=|M|$. From Tanaka's formula, we get
$$|X_{t}|=|M_{t}|=\int_{0}^{t}{sign(M_{s})dM_{s}}+L_{t}^{0}(M).$$
However, $L_{t}^{0}(M)=L_{t}^{0}(X)$ and $dL_{t}^{0}(X)$ is carried by $\{t\geq0: X_{t}=0\}$. Thus, $|X|\in(\Sigma)$. Consequently, it follows from Theorem \ref{abs} that $X\in(\Sigma)$. 
\end{proof}

{\subsection{Construction of solutions for skew Brownian motion equations}}

This subsection is devoted to the construction of solutions for skew Brownian motion equations. More precisely, we construct solutions from continuous processes of the class $(\Sigma)$ for the following equations:
\begin{equation}\label{1}
	X_{t}=x+B_{t}+(2\alpha-1)L_{t}^{0}(X)
\end{equation}
and
\begin{equation}\label{2}
	X_{t}=x+B_{t}+\int_{0}^{t}{(2\alpha(s)-1)dL_{s}^{0}(X)},
\end{equation}
where $B$ is a standard Brownian motion.

{\subsubsection{Construction of solutions with processes whose martingale part is a Brownian motion}}

 First, we use stochastic processes of the class $(\Sigma)$ whose martingale part is a Brownian motion. Many such processes can be found in the literature. For instance, we have $|B|$, $(\sup_{s\leq t}{B_{s}}-B_{t})_{t\geq0}$ or solutions of equations \eqref{1} and \eqref{2} that start from zero. Our solutions are constructed as follows. For any process $X$ of the class $(\Sigma)$ with a Brownian motion as its martingale part in its Doob--Meyer decomposition, we set $Y^{\alpha}=Z^{\alpha}X$, $|Y^{\alpha}|=Z^{\alpha}|X|$, $\mathcal{Y}^{\alpha}=\mathcal{Z}^{\alpha}X$, and $|\mathcal{Y}^{\alpha}|=\mathcal{Z}^{\alpha}|X|$, where  $Z^{\alpha}$ and $\mathcal{Z}^{\alpha}$ are respectively given in \eqref{zalpha}  and \eqref{Zalpha} and constructed with respect to $X$. These solutions are inspired by the construction of Bouhadou and Ouknine \cite{siam}. In fact, their solution is a particular case of the solutions given in the present study.

When $\alpha$ is a constant, we have the following result.
\begin{prop}
Let $X=M+A$ be a process of the class $(\Sigma)$ such that its martingale part is a Brownian motion. Then, $Y^{\alpha}=Z^{\alpha}X$ and $|Y^{\alpha}|=Z^{\alpha}|X|$ are weak solutions of \eqref{1} with the parameter $\alpha$ and starting from 0.
\end{prop}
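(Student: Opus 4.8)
The plan is to apply Remark~\ref{o} directly to $X$ and then to recognize the stochastic-integral term it produces as a Brownian motion via L\'evy's characterization theorem. Since $X\in(\Sigma)$ with martingale part $M=B$, Remark~\ref{o} gives, for every $\alpha\in[0,1]$,
$$Y^{\alpha}_{t}=Z^{\alpha}_{t}X_{t}=\int_{0}^{t}{Z^{\alpha}_{s}dB_{s}}+(2\alpha-1)L^{0}_{t}(Z^{\alpha}X)=\widetilde{B}_{t}+(2\alpha-1)L^{0}_{t}(Y^{\alpha}),$$
where $\widetilde{B}_{t}=\int_{0}^{t}{Z^{\alpha}_{s}dB_{s}}$. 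It then remains to verify that $\widetilde{B}$ is a Brownian motion and that $Y^{\alpha}_{0}=0$; the latter is immediate since $X_{0}=M_{0}+A_{0}=0$.

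The crux of the argument is that $\widetilde{B}$ is a continuous local martingale with $\langle\widetilde{B}\rangle_{t}=\int_{0}^{t}{(Z^{\alpha}_{s})^{2}ds}$, and by the construction \eqref{zalpha} one has $(Z^{\alpha}_{s})^{2}=1_{\{X_{s}\neq0\}}$, because $Z^{\alpha}_{s}=0$ precisely on $\{X_{s}=0\}$. Hence I must show that the zero set of $X$ is Lebesgue-null. This I would obtain from the occupation times formula: as the finite variation part $A$ does not contribute to the quadratic variation, $\langle X\rangle_{s}=\langle B\rangle_{s}=s$, so
$$\int_{0}^{t}{1_{\{X_{s}=0\}}ds}=\int_{0}^{t}{1_{\{X_{s}=0\}}d\langle X\rangle_{s}}=\int_{\R}{1_{\{0\}}(a)L^{a}_{t}(X)da}=0.$$
Therefore $\langle\widetilde{B}\rangle_{t}=t$, and L\'evy's theorem yields that $\widetilde{B}$ is a Brownian motion. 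Consequently $Y^{\alpha}_{t}=\widetilde{B}_{t}+(2\alpha-1)L^{0}_{t}(Y^{\alpha})$ with $Y^{\alpha}_{0}=0$, which is exactly \eqref{1} for $x=0$; this is a weak (rather than strong) solution, because the independent signs $(\zeta_{n})$ entering $Z^{\alpha}$ generally require working on a suitably enlarged probability space.

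For the process $Z^{\alpha}|X|$ I would proceed in parallel. Tanaka's formula gives $|X_{t}|=\int_{0}^{t}{\mathrm{sign}(X_{s})dB_{s}}+L^{0}_{t}(X)$, the term $\int_{0}^{t}{\mathrm{sign}(X_{s})dA_{s}}$ vanishing since $dA$ is carried by $\{X=0\}$ where $\mathrm{sign}$ is $0$. Thus $|X|$ is a continuous semimartingale whose martingale part is the Brownian motion $\widehat{B}_{t}=\int_{0}^{t}{\mathrm{sign}(X_{s})dB_{s}}$, and, since $|X|$ and $X$ have the same zeros, they share the same $Z^{\alpha}$. Applying Proposition~\ref{pzalph} to the semimartingale $|X|$ and using $\int_{0}^{t}{Z^{\alpha}_{s}dL^{0}_{s}(X)}=0$ gives
$$Z^{\alpha}_{t}|X_{t}|=\int_{0}^{t}{Z^{\alpha}_{s}\,\mathrm{sign}(X_{s})dB_{s}}+(2\alpha-1)L^{0}_{t}(Z^{\alpha}|X|).$$
The integrand $Z^{\alpha}_{s}\mathrm{sign}(X_{s})$ again squares to $1_{\{X_{s}\neq0\}}$, so the same occupation-time computation makes the integral a Brownian motion by L\'evy's theorem, and $Z^{\alpha}|X|$ likewise solves \eqref{1} with $x=0$. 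The only genuinely delicate point throughout is the Lebesgue-nullity of $\{X=0\}$, which is what guarantees unit quadratic variation and hence the L\'evy identification; everything else is bookkeeping built on Remark~\ref{o}, Proposition~\ref{pzalph}, and Tanaka's formula.
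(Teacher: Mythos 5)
Your proposal is correct and follows the same high-level route as the paper: start from Remark~\ref{o} (equivalently Proposition~\ref{pzalph}) to get $Y^{\alpha}_{t}=\int_{0}^{t}Z^{\alpha}_{s}\,dB_{s}+(2\alpha-1)L^{0}_{t}(Y^{\alpha})$, identify the stochastic integral as a Brownian motion by L\'evy's characterization, and then handle $Z^{\alpha}|X|$ via Tanaka's formula. The difference lies in how the unit quadratic variation is established. The paper introduces the auxiliary c\`adl\`ag process $k_{t}=\sum_{n}\zeta_{n}1_{[g_{n},d_{n}[}(t)$, invokes the progressive balayage formula (Proposition~\ref{balpro}) to write $Y^{\alpha}_{t}=\int_{0}^{t}k_{s}\,dX_{s}+R_{t}$, and then asserts $k_{s}\in\{-1,1\}$ to conclude $\langle W,W\rangle_{t}=\langle X,X\rangle_{t}=t$; strictly speaking $k$ still vanishes on the zero set of $X$ away from the left endpoints $g_{n}$ (a set that is typically uncountable), so the paper's computation implicitly uses the fact that $d\langle X,X\rangle$ does not charge $\{X=0\}$. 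You instead prove exactly that fact explicitly: $(Z^{\alpha}_{s})^{2}=1_{\{X_{s}\neq0\}}$ and, by the occupation times formula, $\int_{0}^{t}1_{\{X_{s}=0\}}\,d\langle X\rangle_{s}=\int_{\R}1_{\{0\}}(a)L^{a}_{t}(X)\,da=0$, whence $\langle\widetilde{B}\rangle_{t}=t$. Your version is shorter (it dispenses with the balayage detour entirely) and closes the small gap in the paper's ``$k_{s}\in\{-1,1\}$'' step; the paper's version buys a uniform template that it reuses verbatim in the later propositions where $X$ is time-changed. Your treatment of $Z^{\alpha}|X|$, including the vanishing of $\int_{0}^{t}Z^{\alpha}_{s}\,dL^{0}_{s}(X)$ and of $\int_{0}^{t}\mathrm{sign}(X_{s})\,dA_{s}$, matches the paper's, and your remark on why the solution is only weak (enlargement of the space by the signs $\zeta_{n}$) is a point the paper leaves implicit.
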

\begin{proof}
 From Remark \ref{o}, we have
$$Y^{\alpha}_{t}=W_{t}+(2\alpha-1)L^{0}_{t}(Y^{\alpha})$$
with $W_{t}=\int_{0}^{t}{Z^{\alpha}_{s}dM_{s}}$. Now, let us define a process $k$ as follows:
\begin{equation}\label{k}
	k_{t}=\sum_{n=0}^{+\infty}{\zeta_{n}1_{[g_{n},d_{n}[}(t)}.
\end{equation}
First, we remark that for $\gamma_{t}=\sup\{s\leq t:X_{s}=0\}$, we have
$$k_{\gamma_{t}}X_{t}=Z^{\alpha}_{t}X_{t}.$$
Meanwhile, we obtain the following from Proposition \ref{balpro}:
$$k_{\gamma_{t}}X_{t}={\int_{0}^{t}{^{p}k_{\gamma_{s}}dX_{s}}+R_{t}},$$
where $R$ is an adapted, continuous process with bounded variations such that $dR_{t}$ is carried by the set $\{t\geq0:X_{t}=0\}$. Since $k$ is a càdlàg process, we obtain 
$$k_{\gamma_{t}}X_{t}={\int_{0}^{t}{k_{s-}dX_{s}}+R_{t}}.$$
Finally, from the continuity of $X$, we get 
$$k_{\gamma_{t}}X_{t}={\int_{0}^{t}{k_{s}dX_{s}}+R_{t}}.$$
However,
$$\langle W,W\rangle_{t}=\langle Y^{\alpha},Y^{\alpha}\rangle_{t}=\langle k_{\gamma_{\cdot}}X_{\cdot},k_{\gamma_{\cdot}}X_{\cdot}\rangle_{t}.$$
Then,
$$\langle W,W\rangle_{t}=\int_{0}^{t}{(k_{s})^{2}d\langle X,X\rangle_{s}}=\langle X,X\rangle_{t}=\langle M,M\rangle_{t}$$
since $k_{s}\in\{-1,1\}$. This implies that $\langle W,W\rangle_{t}=t$ because $M$ is a Brownian motion. Thus, $W$ is a Brownian motion. Consequently, $Y^{\alpha}$ is a weak solution of \eqref{1}. Meanwhile, from Theorem \ref{abs}, we have that $|X|$ is a continuous submartingale of the class $(\Sigma)$. Moreover,
$$|X_{t}|=|Z^{\alpha}_{t}X_{t}|=\int_{0}^{t}{{\rm sgn}(Z^{\alpha}_{s}X_{s})d(Z^{\alpha}_{s}X_{s})}+L_{t}^{0}(Z^{\alpha}X),$$
i.e.,
$$|X_{t}|=\int_{0}^{t}{Z^{\alpha}_{s}{\rm sgn}(Z^{\alpha}_{s}X_{s})dM_{s}}+(2\alpha-1)\int_{0}^{t}{{\rm sgn}(Z^{\alpha}_{s}X_{s})dL_{s}^{0}(Z^{\alpha}X)}+L_{t}^{0}(Z^{\alpha}X).$$
Therefore, ${\rm sgn}(Z^{\alpha}_{s}X_{s})=Z^{\alpha}_{s}{\rm sgn}(X_{s})$. Hence,
$$|X_{t}|=\int_{0}^{t}{(Z^{\alpha}_{s})^{2}{\rm sgn}(X_{s})dM_{s}}+(2\alpha-1)\int_{0}^{t}{Z^{\alpha}_{s}{\rm sgn}(X_{s})dL_{s}^{0}(Z^{\alpha}X)}+L_{t}^{0}(Z^{\alpha}X).$$
Thus,
$$|X_{t}|=\int_{0}^{t}{{\rm sgn}(X_{s})dM_{s}}+L_{t}^{0}(Z^{\alpha}X)$$
since $Z^{\alpha}$ is defined on the complementary set of $\{t\geq0:X_{t}=0\}=\{t\geq0:Z^{\alpha}_{t}X_{t}=0\}$ and $dL_{t}^{0}(Z^{\alpha}X)$ is carried by $\{t\geq0:X_{t}=0\}=\{t\geq0:Z^{\alpha}_{t}X_{t}=0\}$. However, we remark that the martingale part $W_{t}=\int_{0}^{t}{{\rm sgn}(X_{s})dM_{s}}$ satisfies the following: $\forall t\geq0$,
$$\langle W,W\rangle_{t}=\int_{0}^{t}{({\rm sgn}(X_{s}))^{2}d\langle M,M\rangle_{s}}=\langle M,M\rangle_{t}=t.$$
In other words, $W$ is a Brownian motion. Consequently, from above, it follows that $|Y^{\alpha}|$ is also a weak solution of \eqref{1}.
   \end{proof}

When $\alpha$ is a piecewise constant, we propose the following solutions.
\begin{prop}\label{p8}
Let $X=M+A$ be a process of the class $(\Sigma)$ such that $M$ is a standard Brownian motion. Then, $\mathcal{Y}^{\alpha}=\mathcal{Z}^{\alpha}X$ and $|\mathcal{Y}^{\alpha}|=\mathcal{Z}^{\alpha}|X|$ are weak solutions of \eqref{2} with the parameter $\alpha$ and starting from 0.
\end{prop}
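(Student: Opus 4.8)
The plan is to mirror the proof of the constant-parameter case, replacing the balayage identity for $Z^{\alpha}$ by its piecewise-constant analogue due to Bouhadou and Ouknine. First I would apply that identity (the second recalled proposition of \cite{siam}, for $\mathcal{Z}^{\alpha}$) to the semi-martingale $Y=X$, which gives
$$\mathcal{Z}^{\alpha}_{t}X_{t}=\int_{0}^{t}{\mathcal{Z}^{\alpha}_{s}dX_{s}}+\int_{0}^{t}{(2\alpha(s)-1)dL_{s}^{0}(\mathcal{Z}^{\alpha}X)}.$$
Splitting $dX_{s}=dM_{s}+dA_{s}$ and noting that $\int_{0}^{t}{\mathcal{Z}^{\alpha}_{s}dA_{s}}=0$ --- because $dA$ is carried by $\{s:X_{s}=0\}$ while $\mathcal{Z}^{\alpha}_{s}=0$ precisely on that set --- I obtain
$$\mathcal{Y}^{\alpha}_{t}=\int_{0}^{t}{\mathcal{Z}^{\alpha}_{s}dM_{s}}+\int_{0}^{t}{(2\alpha(s)-1)dL_{s}^{0}(\mathcal{Y}^{\alpha})},$$
which already has the shape of \eqref{2} with $x=0$ (note $\mathcal{Y}^{\alpha}_{0}=\mathcal{Z}^{\alpha}_{0}X_{0}=0$), once the first integral is shown to be a Brownian motion.

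Second, setting $W_{t}=\int_{0}^{t}{\mathcal{Z}^{\alpha}_{s}dM_{s}}$, I would compute its bracket. Since $M$ is a standard Brownian motion and, for each $s$ with $X_{s}\neq 0$, the point $s$ lies in exactly one excursion interval $]g_{n},d_{n}[$ and one partition cell $[t_{i},t_{i+1})$ so that $\mathcal{Z}^{\alpha}_{s}\in\{-1,1\}$, we have $(\mathcal{Z}^{\alpha}_{s})^{2}=1_{\{X_{s}\neq 0\}}$. The occupation-time identity $\int_{0}^{t}{1_{\{X_{s}=0\}}d\langle M\rangle_{s}}=\int_{0}^{t}{1_{\{X_{s}=0\}}ds}=0$ (valid because $\langle X\rangle=\langle M\rangle$ as $A$ has finite variation, and the time a continuous semi-martingale spends at a fixed level carries no quadratic variation) then yields $\langle W\rangle_{t}=\int_{0}^{t}{(\mathcal{Z}^{\alpha}_{s})^{2}ds}=t$. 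By L\'evy's characterization $W$ is a standard Brownian motion, so $\mathcal{Y}^{\alpha}=\mathcal{Z}^{\alpha}X$ is a weak solution of \eqref{2} starting from $0$.

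Third, for $|\mathcal{Y}^{\alpha}|=\mathcal{Z}^{\alpha}|X|$ I would reduce to the first two steps applied to $|X|$ in place of $X$. By Theorem \ref{abs}, $|X|$ is of the class $(\Sigma)$; Tanaka's formula together with $\int_{0}^{t}{{\rm sgn}(X_{s})dA_{s}}=0$ (again $dA$ is carried by the zero set, where ${\rm sgn}$ vanishes) gives $|X_{t}|=\widetilde{W}_{t}+L_{t}^{0}(X)$ with $\widetilde{W}_{t}=\int_{0}^{t}{{\rm sgn}(X_{s})dM_{s}}$, and the same bracket computation shows $\langle\widetilde{W}\rangle_{t}=t$, so $\widetilde{W}$ is a Brownian motion and $|X|$ is a class $(\Sigma)$ process whose martingale part is Brownian. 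Since $|X|$ and $X$ share the zero set, they have the same excursion intervals and hence the same process $\mathcal{Z}^{\alpha}$; applying the first two steps to $|X|$ therefore shows $\mathcal{Z}^{\alpha}|X|$ is a weak solution of \eqref{2}. The only genuinely delicate points are the vanishing of the finite-variation and local-time cross terms (which rests on $dA$ and $dL^{0}(\mathcal{Z}^{\alpha}X)$ being carried by the common zero set $\{X=0\}=\{\mathcal{Z}^{\alpha}X=0\}$) and the fact that the zero set of $X$ has zero Lebesgue measure; the piecewise-constant structure of $\alpha$ introduces no new difficulty, since it is entirely absorbed into the Bouhadou--Ouknine identity.
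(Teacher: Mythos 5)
Your proof is correct and follows essentially the same route as the paper's: apply the Bouhadou--Ouknine balayage identity for $\mathcal{Z}^{\alpha}$, kill the finite-variation term because $dA$ is carried by the common zero set, identify the martingale part as a Brownian motion via L\'evy's characterization, and reduce the $|X|$ case to the first one through Tanaka's formula and Theorem \ref{abs}. The only (cosmetic) difference is that you compute $\langle W\rangle_{t}=\int_{0}^{t}(\mathcal{Z}^{\alpha}_{s})^{2}\,d\langle M\rangle_{s}=t$ directly from the isometry together with the occupation-time fact that $\{X=0\}$ carries no $d\langle M\rangle$-mass, whereas the paper detours through the c\`adl\`ag process $k$ and the progressive balayage formula (Proposition \ref{balpro}) to identify $\langle\mathcal{Y}^{\alpha}\rangle=\langle X\rangle$; your version is, if anything, more direct.
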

\begin{proof}
By applying Proposition \ref{pzalph}, we get
$$\mathcal{Y}^{\alpha}_{t}=\int_{0}^{t}{\mathcal{Z}^{\alpha}_{s}dX_{s}}+\int_{0}^{t}{(2\alpha(s)-1)dL_{s}^{0}(\mathcal{Y}^{\alpha})}$$
$$\hspace{2.8cm}=\int_{0}^{t}{\mathcal{Z}^{\alpha}_{s}dM_{s}}+\int_{0}^{t}{\mathcal{Z}^{\alpha}_{s}dV_{s}}+\int_{0}^{t}{(2\alpha(s)-1)dL_{s}^{0}(\mathcal{Y}^{\alpha})}.$$
Hence, we get
$$\mathcal{Y}^{\alpha}_{t}=\int_{0}^{t}{\mathcal{Z}^{\alpha}_{s}dM_{s}}+\int_{0}^{t}{(2\alpha(s)-1)dL_{s}^{0}(\mathcal{Y}^{\alpha})},$$
since $\mathcal{Z}^{\alpha}$ is defined on the complementary set of the zero set of $X$ and $dV_{t}$ is carried by $\{t\geq0:X_{t}=0\}$.
Now, let 
$$k_{t}=\sum_{n=0}^{+\infty}{\sum_{i=0}^{m}{\zeta^{i}_{n}1_{[g_{n},d_{n}[\cap[t-{i},t_{i+1})}(t)}}.$$
We can see that $\forall t\geq0$, $\mathcal{Y}^{\alpha}_{t}=k_{\gamma_{t}}X_{t}$. From Proposition \ref{balpro}, we have
$$k_{\gamma_{t}}X_{t}={\int_{0}^{t}{^{p}k_{\gamma_{s}}dX_{s}}+R_{t}},$$
where $R$ is an adapted, continuous process with bounded variations such that $dR_{t}$ is carried by the set $\{t\geq0:X_{t}=0\}$. Hence, we obtain 
$$k_{\gamma_{t}}X_{t}={\int_{0}^{t}{k_{s-}dX_{s}}+R_{t}},$$
since $k$ is a càdlàg process. Finally, from the continuity of $X$, we get 
$$k_{\gamma_{t}}X_{t}={\int_{0}^{t}{k_{s}dX_{s}}+R_{t}}.$$
Therefore, by letting $W_{t}=\int_{0}^{t}{\mathcal{Z}^{\alpha}_{s}dM_{s}}$, we have
$$\langle W,W\rangle_{t}=\langle \mathcal{Y}^{\alpha},\mathcal{Y}^{\alpha}\rangle_{t}=\langle k_{\gamma_{\cdot}}X_{\cdot},k_{\gamma_{\cdot}}X_{\cdot}\rangle_{t},$$
i.e.,
$$\langle W,W\rangle_{t}=\int_{0}^{t}{(k_{s})^{2}d\langle X,X\rangle_{s}}=\langle X,X\rangle_{t}=\langle M,M\rangle_{t},$$
since $k_{s}\in\{-1,1\}$. This implies that $\langle W,W\rangle_{t}=t$ because $M$ is a Brownian motion. Thus, $W$ is a Brownian motion. Consequently, $\mathcal{Y}^{\alpha}$ is a weak solution of \eqref{2}.

Meanwhile, from above, we can say that $|X|=m+V$ is a continuous process of the class $(\Sigma)$ and its martingale part is a Brownian motion. Consequently, from above, $|\mathcal{Y}^{\alpha}|$ is also a weak solution of \eqref{2}.
\end{proof}

\begin{rem}
Recall that the solution proposed by Bouhadou and Ouknine \cite{siam} is $|\mathcal{Y}^{\alpha}|=\mathcal{Z}^{\alpha}|B|$, where $B$ is a standard Brownian motion and $\mathcal{Z}^{\alpha}$ is constructed relatively at $|B|$. It now clear that this solution is a particular case of the one we presented in Proposition \ref{p8} since $|B|$ is a process of the class $(\Sigma)$ and its martingale part $\left(\int_{0}^{t}{{\rm sgn}(B_{s})dB_{s}}\right)_{t\geq0}$ is a Brownian motion.
\end{rem}

{\subsubsection{Construction of general solutions from the class \texorpdfstring{$(\Sigma)$}{sigma}}

Now, we shall construct solutions of \eqref{1} and \eqref{2} from continuous processes of the class $(\Sigma)$ whose martingale part is not necessarily a Brownian motion. For this purpose, we propose the following constructions. We consider a continuous process $X=M+A$ of the class $(\Sigma)$. We define $\tau_{t}=\inf\{s\geq0:\langle M,M\rangle_{s}>t\}$, $Y_{t}=X_{\tau_{t}}$, and we construct $Z^{\alpha}$ and $\mathcal{Z}^{\alpha}$ with respect to $Y$. Hence, our solutions are constructed as follows: 
$$\forall t\geq0\text{, }Y^{\alpha}_{t}=Z^{\alpha}_{t}Y_{t}\text{, }|Y^{\alpha}_{t}|=Z^{\alpha}_{t}|Y_{t}|\text{, }\mathcal{Y}^{\alpha}_{t}=\mathcal{Z}^{\alpha}_{t}Y_{t}\text{ and }|\mathcal{Y}^{\alpha}_{t}|=\mathcal{Z}^{\alpha}_{t}|Y_{t}|.$$

First, we consider the case of constant $\alpha$.
\begin{prop}
The processes $Y^{\alpha}$ and $|Y^{\alpha}|$ are weak solutions of \eqref{1} with the parameter $\alpha$ and starting from 0.
\end{prop}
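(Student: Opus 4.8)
The plan is to reduce the general case to the already-established situation in which the martingale part is a Brownian motion, by means of the Dambis--Dubins--Schwarz time change. Recall that $X=M+A$ with $M$ a continuous local martingale, $M_{0}=0$, and set $C_{t}=\langle M,M\rangle_{t}$ and $\tau_{t}=\inf\{s\geq0:C_{s}>t\}$. By the Dambis--Dubins--Schwarz theorem, $B_{t}:=M_{\tau_{t}}$ is, with respect to the time-changed filtration $\mathcal{G}_{t}=\mathcal{F}_{\tau_{t}}$, a standard Brownian motion on the event $\{C_{\infty}=\infty\}$; otherwise one enlarges the probability space and completes $B$ by an independent Brownian motion, a routine step that does not affect the law of the solution. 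Since $M_{t}=B_{C_{t}}$, we obtain
$$Y_{t}=X_{\tau_{t}}=M_{\tau_{t}}+A_{\tau_{t}}=B_{t}+\tilde A_{t},\qquad \tilde A_{t}:=A_{\tau_{t}},$$
and $Y_{0}=X_{0}=0$.

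First I would verify that $Y=B+\tilde A$ is a continuous process of the class $(\Sigma)$ relative to $\mathcal{G}$, with the Brownian motion $B$ as its martingale part. Continuity of $Y$ is the first point: the jumps of $\tau$ correspond to the flat intervals of $C$, hence of $M$, and on such an interval $A$ cannot charge its support, because $dA$ is carried by $\{X=0\}$ whereas a nontrivial increment of $A$ against a constant value of $M$ would immediately push $X=M+A$ off $0$. Thus $\tilde A$, and hence $Y$, is continuous. Next, $\tilde A$ is of finite variation, and by the change of variables $s\mapsto\tau_{s}$ for the monotone time change, the measure $d\tilde A$ is the image of $dA$ under $C$; since $dA$ is carried by $\{u\geq0:X_{u}=0\}$ and this set is mapped to $\{s\geq0:X_{\tau_{s}}=0\}=\{s\geq0:Y_{s}=0\}$, it follows that $d\tilde A$ is carried by $\{s\geq0:Y_{s}=0\}$. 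Together with $\tilde A_{0}=0$ this shows $Y\in(\Sigma)$ with martingale part $B$, and in particular the excursion intervals of the continuous process $Y$ away from $0$ are well defined, so that $Z^{\alpha}$ (and $\mathcal{Z}^{\alpha}$) built relative to $Y$ make sense.

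Finally, since $Z^{\alpha}$ here is constructed relative to $Y$, the process $Y$ plays exactly the role of the class $(\Sigma)$ process with Brownian motion as martingale part in the proposition established above for that case. Applying that proposition to $Y$ yields at once that $Y^{\alpha}=Z^{\alpha}Y$ and $|Y^{\alpha}|=Z^{\alpha}|Y|$ are weak solutions of \eqref{1} with parameter $\alpha$ and starting from $0$. The main obstacle is the second paragraph: one must argue carefully that the time change transports the support property ``$dA$ is carried by $\{X=0\}$'' into ``$d\tilde A$ is carried by $\{Y=0\}$'' and that $Y$ remains continuous, paying attention to the flat intervals of $\langle M,M\rangle$; the Dambis--Dubins--Schwarz extension on $\{C_{\infty}<\infty\}$ is a standard but non-vacuous technical point that should be acknowledged.
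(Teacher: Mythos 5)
Your argument is correct, and it reaches the conclusion by a somewhat different route than the paper. The paper does not invoke Dambis--Dubins--Schwarz and does not reduce to the earlier proposition for Brownian martingale parts; instead it reruns the whole computation directly on $Y_t=X_{\tau_t}$: it applies the Bouhadou--Ouknine balayage identity to $Z^{\alpha}Y$, discards the term $\int_0^t Z^{\alpha}_s\,dA_{\tau_s}$ because $dA_{\tau_s}$ is carried by the zero set of $Y$, rewrites the remaining martingale term as $W_t=\int_0^{\tau_t}Z^{\alpha}_{\langle M,M\rangle_s}\,dM_s$, and checks $\langle W,W\rangle_t=\langle M,M\rangle_{\tau_t}=t$, so that $W$ is a Brownian motion by L\'evy's characterization; the statement for $|Y^{\alpha}|$ is then obtained from $|X|\in(\Sigma)$ exactly as you do. Your organization --- first proving that $Y=B+\tilde A$ is itself a continuous process of the class $(\Sigma)$ whose martingale part is the DDS Brownian motion, then citing the already-proved proposition --- buys two things the paper leaves implicit: it justifies the continuity of $A_{\tau_\cdot}$ across the flat intervals of $\langle M,M\rangle$ (the paper silently assumes $Y$ is a continuous semimartingale, which is needed for its excursion intervals, for $Z^{\alpha}$ and for $L^0(Y^{\alpha})$ to make sense), and it avoids duplicating the quadratic-variation computation. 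What the paper's direct computation buys is that it never has to change filtration explicitly, whereas your reduction applies the earlier proposition in $\mathcal{G}_t=\mathcal{F}_{\tau_t}$ --- harmless for weak solutions, and you correctly flag both this and the enlargement needed on $\{\langle M,M\rangle_\infty<\infty\}$. One step of yours deserves to be made precise: on an interval $[a,b]$ where $M\equiv c$ is constant, the identity $d(A_t+c)^2=2(A_t+c)1_{\{A_t=-c\}}\,dA_t=0$ shows $(A_t+c)^2$ is constant, hence $A$ is constant there; this is the clean version of your informal ``a nontrivial increment of $A$ would push $X$ off $0$'' remark, and it is also what legitimizes the time-change formula transporting ``$dA$ carried by $\{X=0\}$'' into ``$d\tilde A$ carried by $\{Y=0\}$.''
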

\begin{proof}
From Proposition \ref{zalpha}, we have
$$Y^{\alpha}_{t}=\int_{0}^{t}{Z^{\alpha}_{s}dY_{s}^{\alpha}}+(2\alpha-1)L_{t}^{0}(Y^{\alpha}),$$
i.e.,
$$Y^{\alpha}_{t}=\int_{0}^{t}{Z^{\alpha}_{s}dM_{\tau_{s}}}+\int_{0}^{t}{Z^{\alpha}_{s}dA_{\tau_{s}}}+(2\alpha-1)L_{t}^{0}(Y^{\alpha}).$$
Hence,
$$Y^{\alpha}_{t}=\int_{0}^{t}{Z^{\alpha}_{s}dM_{\tau_{s}}}+(2\alpha-1)L_{t}^{0}(Y^{\alpha})$$
since $dA_{\tau_{s}}$ is carried by $\{s\geq0:X_{\tau_{s}}=0\}=\{s\geq0:Z^{\alpha}_{s}=0\}$.
Then,
$$Y^{\alpha}_{t}=\int_{0}^{\tau_{t}}{Z^{\alpha}_{\langle M,M\rangle_{s}}dM_{s}}+(2\alpha-1)L_{t}^{0}(Y^{\alpha}),$$
i.e.,
$$Y^{\alpha}_{t}=W_{t}+(2\alpha-1)L_{t}^{0}(Y^{\alpha}),$$
with $W_{t}=\int_{0}^{\tau_{t}}{Z^{\alpha}_{\langle M,M\rangle_{s}}dM_{s}}$.

Now, we construct the process $k$ with respect to $Y$. More precisely, 
$$k_{t}=\sum_{n=0}^{+\infty}{\zeta_{n}1_{[g_{n},d_{n}[}(t)}.$$
 We can see that $\forall t\geq0$, $Y^{\alpha}_{t}=k_{\gamma_{t}}Y_{t}$. From Proposition \ref{balpro}, we have
$$k_{\gamma_{t}}Y_{t}={\int_{0}^{t}{^{p}k_{\gamma_{s}}dY_{s}}+R_{t}},$$
where $R$ is an adapted, continuous process with bounded variations such that $dR_{t}$ is carried by the set $\{t\geq0:Y_{t}=0\}$. Hence, we obtain 
$$k_{\gamma_{t}}Y_{t}={\int_{0}^{t}{k_{s-}dY_{s}}+R_{t}},$$
since $k$ is a càdlàg process. It follows from the continuity of $Y$ that  
$$k_{\gamma_{t}}Y_{t}={\int_{0}^{t}{k_{s}dY_{s}}+R_{t}}.$$
Therefore, we have
$$\langle W,W\rangle_{t}=\langle Y^{\alpha},Y^{\alpha}\rangle_{t}=\langle k_{\gamma_{\cdot}}Y_{\cdot},k_{\gamma_{\cdot}}Y_{\cdot}\rangle_{t},$$
i.e.,
$$\langle W,W\rangle_{t}=\int_{0}^{\tau_{t}}{(k_{\langle M,M\rangle_{s}})^{2}d\langle X,X\rangle_{s}}=\langle X,X\rangle_{\tau_{t}}=\langle M,M\rangle_{\tau_{t}}$$,
since $k_{\langle M,M\rangle_{s}}\in\{-1,1\}$. This implies that $\langle W,W\rangle_{t}=t$. Thus, $W$ is a Brownian motion. Consequently, $Y^{\alpha}$ is a weak solution of \eqref{1} with the parameter $\alpha$ and starting from 0. 

Meanwhile, from Theorem \ref{abs}, we have $|X|\in(\Sigma)$. Then, from above, $|Y^{\alpha}|$ is also a weak solution of \eqref{1} with the parameter $\alpha$ and starting from 0.
\end{proof}

Now, we propose solutions for \eqref{2} in the following proposition.

\begin{prop}
$\mathcal{Y}^{\alpha}_{t}$ and $|\mathcal{Y}^{\alpha}_{t}|$ are weak solutions of \eqref{2} with the parameter $\alpha$ and starting from 0.
\end{prop}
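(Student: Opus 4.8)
The plan is to reuse the two ingredients already in play for the adjacent propositions: the $\mathcal{Z}^{\alpha}$-balayage identity of Ouknine and Bouhadou \cite{siam}, which manufactures the inhomogeneous skew local-time term of \eqref{2}, and the time change $\tau$, which (by Dambis--Dubins--Schwarz) turns the martingale part of $Y$ into a Brownian clock. First I would apply the $\mathcal{Z}^{\alpha}$-identity of \cite{siam} to the continuous semi-martingale $Y=M_{\tau_{\cdot}}+A_{\tau_{\cdot}}$, obtaining
$$\mathcal{Y}^{\alpha}_{t}=\int_{0}^{t}{\mathcal{Z}^{\alpha}_{s}dY_{s}}+\int_{0}^{t}{(2\alpha(s)-1)dL_{s}^{0}(\mathcal{Y}^{\alpha})}.$$
Splitting $dY_{s}=dM_{\tau_{s}}+dA_{\tau_{s}}$ and using that $dA_{\tau_{s}}$ is carried by $\{s\geq0:X_{\tau_{s}}=0\}=\{s\geq0:\mathcal{Z}^{\alpha}_{s}=0\}$, the integral against $A_{\tau}$ vanishes, so $\mathcal{Y}^{\alpha}_{t}=\int_{0}^{t}{\mathcal{Z}^{\alpha}_{s}dM_{\tau_{s}}}+\int_{0}^{t}{(2\alpha(s)-1)dL_{s}^{0}(\mathcal{Y}^{\alpha})}$. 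Undoing the time change then gives $\mathcal{Y}^{\alpha}_{t}=W_{t}+\int_{0}^{t}{(2\alpha(s)-1)dL_{s}^{0}(\mathcal{Y}^{\alpha})}$ with $W_{t}=\int_{0}^{\tau_{t}}{\mathcal{Z}^{\alpha}_{\langle M,M\rangle_{s}}dM_{s}}$, and it remains only to identify $W$ as a standard Brownian motion.

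For this I would repeat the balayage computation of the two preceding proofs. Introduce the c\`adl\`ag progressive process
$$k_{t}=\sum_{n=0}^{+\infty}{\sum_{i=0}^{m}{\zeta^{i}_{n}1_{[g_{n},d_{n}[\cap[t_{i},t_{i+1})}(t)}}$$
built relative to $Y$, so that $\mathcal{Y}^{\alpha}_{t}=k_{\gamma_{t}}Y_{t}$ with $\gamma_{t}=\sup\{s\leq t:Y_{s}=0\}$. Proposition \ref{balpro} yields $k_{\gamma_{t}}Y_{t}=\int_{0}^{t}{{}^{p}k_{\gamma_{s}}dY_{s}}+R_{t}$ with $dR$ carried by the zero set of $Y$; since $k$ is right-continuous and $Y$ is continuous, ${}^{p}k_{\gamma_{s}}$ may be replaced by $k_{s}$ under the integral. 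Taking brackets and performing the time change, $\langle W,W\rangle_{t}=\langle\mathcal{Y}^{\alpha},\mathcal{Y}^{\alpha}\rangle_{t}=\int_{0}^{\tau_{t}}{(k_{\langle M,M\rangle_{s}})^{2}d\langle X,X\rangle_{s}}=\langle M,M\rangle_{\tau_{t}}=t$, the middle step using $k_{\langle M,M\rangle_{s}}\in\{-1,1\}$ off the zero set together with $\langle X,X\rangle=\langle M,M\rangle$. By L\'evy's characterization $W$ is a Brownian motion, whence $\mathcal{Y}^{\alpha}$ is a weak solution of \eqref{2} with parameter $\alpha$ and starting from $0$.

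Finally, for $|\mathcal{Y}^{\alpha}|=\mathcal{Z}^{\alpha}|Y|$ I would invoke Theorem \ref{abs} to obtain $|X|\in(\Sigma)$; then $|Y|=|X_{\tau_{\cdot}}|$ is again a continuous process of the class $(\Sigma)$ whose martingale part, after the same time change, is a Brownian motion, and exactly the same argument run on $|Y|$ in place of $Y$ shows $|\mathcal{Y}^{\alpha}|$ to be a weak solution of \eqref{2}.

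The step I expect to be the main obstacle is the bracket computation across the time change: one must verify that $\langle W,W\rangle_{t}=\int_{0}^{\tau_{t}}(k_{\langle M,M\rangle_{s}})^{2}d\langle X,X\rangle_{s}$ genuinely collapses to $\langle M,M\rangle_{\tau_{t}}=t$. This rests on $k$ taking values in $\{-1,1\}$ \emph{exactly} off the zero set of $Y$ (so that its square equals $1$ for $d\langle M,M\rangle$-almost every $s$) together with $\tau$ and $\langle M,M\rangle_{\cdot}$ acting as mutual inverses in the substitution; the remaining manipulations are routine and strictly parallel to the two preceding propositions.
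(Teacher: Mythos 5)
Your proposal is correct and follows essentially the same route as the paper's own proof: apply the Ouknine--Bouhadou balayage identity for $\mathcal{Z}^{\alpha}$ to the time-changed process $Y=X_{\tau_{\cdot}}$, kill the finite-variation integral because $dA_{\tau_{\cdot}}$ is carried by the zero set, identify $W_{t}=\int_{0}^{\tau_{t}}\mathcal{Z}^{\alpha}_{\langle M,M\rangle_{s}}dM_{s}$ as a Brownian motion via the bracket computation $\langle W,W\rangle_{t}=\langle M,M\rangle_{\tau_{t}}=t$, and then transfer the conclusion to $|\mathcal{Y}^{\alpha}|$ through Theorem \ref{abs}. The only differences are cosmetic (you name D--D--S and L\'evy's characterization explicitly, and you write the partition indicator as $[t_{i},t_{i+1})$, correcting what is evidently a typo in the paper's definition of $k$).
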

\begin{proof}
From Proposition \ref{pzalph}, we have
$$\mathcal{Y}^{\alpha}_{t}=\int_{0}^{t}{\mathcal{Z}^{\alpha}_{s}dY_{s}}+\int_{0}^{t}{(2\alpha(s)-1)dL_{s}^{0}(\mathcal{Y}^{\alpha})}$$
$$\hspace{2.8cm}=\int_{0}^{t}{\mathcal{Z}^{\alpha}_{s}dM_{\tau_{s}}}+\int_{0}^{t}{\mathcal{Z}^{\alpha}_{s}dV_{\tau_{s}}}+\int_{0}^{t}{(2\alpha(s)-1)dL_{s}^{0}(\mathcal{Y}^{\alpha})}.$$
This implies that
$$\mathcal{Y}^{\alpha}_{t}=\int_{0}^{t}{\mathcal{Z}^{\alpha}_{s}dM_{\tau_{s}}}+\int_{0}^{t}{(2\alpha(s)-1)dL_{s}^{0}(\mathcal{Y}^{\alpha})},$$
since $\mathcal{Z}^{\alpha}$ is defined on the complementary set of the zero set of $Y$ and $dV_{\tau_{t}}$ is carried by $\{t\geq0:Y_{t}=0\}$.
Now, we consider the following process $k$: 
$$k_{t}=\sum_{n=0}^{+\infty}{\sum_{i=0}^{m}{\zeta^{i}_{n}1_{[g_{n},d_{n}[\cap[t-{i},t_{i+1})}(t)}},$$
which is defined with respect to $Y$. We can see that $\forall t\geq0$, $\mathcal{Y}^{\alpha}_{t}=k_{\gamma_{t}}Y_{t}$. From Proposition \ref{balpro}, we have
$$k_{\gamma_{t}}Y_{t}={\int_{0}^{t}{^{p}k_{\gamma_{s}}dY_{s}}+R_{t}},$$
where $R$ is an adapted, continuous process with bounded variations such that $dR_{t}$ is carried by the set $\{t\geq0:Y_{t}=0\}$. Hence, we obtain 
$$k_{\gamma_{t}}Y_{t}={\int_{0}^{t}{k_{s-}dY_{s}}+R_{t}},$$
since $k$ is a càdlàg process. It follows from the continuity of $Y$ that  
$$k_{\gamma_{t}}Y_{t}={\int_{0}^{t}{k_{s}dY_{s}}+R_{t}}.$$
Therefore, by letting $W_{t}=\int_{0}^{t}{\mathcal{Z}^{\alpha}_{s}dM_{\tau_{s}}}$, we have
$$\langle W,W\rangle_{t}=\langle \mathcal{Y}^{\alpha},\mathcal{Y}^{\alpha}\rangle_{t}=\langle k_{\gamma_{\cdot}}Y_{\cdot},k_{\gamma_{\cdot}}Y_{\cdot}\rangle_{t},$$
i.e.,
$$\langle W,W\rangle_{t}=\int_{0}^{\tau_{t}}{(k_{\langle M,M\rangle_{s}})^{2}d\langle X,X\rangle_{s}}=\langle X,X\rangle_{\tau_{t}}=\langle M,M\rangle_{\tau_{t}},$$
since $k_{\langle M,M\rangle_{s}}\in\{-1,1\}$. This implies that $\langle W,W\rangle_{t}=t$. Thus, $W$ is a Brownian motion. Consequently, $\mathcal{Y}^{\alpha}$ is a weak solution of \eqref{2}.

Meanwhile, recall from Theorem \ref{abs} that $|X|$ is a continuous semi-martingale of the class $(\Sigma)$. Therefore, from above, we can say that $|\mathcal{Y}^{\alpha}|$ is also a weak solution of \eqref{2}.
\end{proof}

{\color{myaqua}

\end{document}